\def\hhmm{\number\hh:\ifnum\mm<10{}0\fi\number\mm}
\DeclareMathOperator{\pr}{pr}
\DeclareMathOperator{\rs}{rs}
\DeclareMathOperator{\Pol}{Pol}
\DeclareMathOperator{\trdeg}{trdeg}
\DeclareMathOperator{\codim}{codim}
\DeclareMathOperator{\Spec}{Spec}
\DeclareMathOperator{\Sym}{Sym}
\DeclareMathOperator{\Strat}{Strat}
\DeclareMathOperator{\red}{red}
\DeclareMathOperator{\Frac}{Frac}
\DeclareMathOperator{\reg}{reg}
\newcommand{\Z}{\mathbb{Z}}
\newcommand{\N}{\mathbb{N}}
\newcommand{\A}{\mathbb{A}}
\newcommand{\llparen}{(\!(}
\newcommand{\rrparen}{)\!)}
\renewcommand{\P}{\mathbb{P}}
\renewcommand{\subset}{\subseteq}
\renewcommand{\supset}{\supseteq}
\renewcommand{\phi}{\varphi}
\declaretheoremstyle[spaceabove=15pt,spacebelow=15pt]{defstyle}
\declaretheoremstyle[ spaceabove=15pt, spacebelow=15pt, bodyfont=\itshape]{thmstyle}
\declaretheorem[numberwithin=section, style=thmstyle]{Theorem}
\declaretheorem[sibling=Theorem, style=thmstyle]{Proposition}
\declaretheorem[sibling=Theorem,  style=thmstyle]{Lemma}
\declaretheorem[sibling=Theorem, style=thmstyle]{Corollary}
\declaretheorem[sibling=Theorem, style=defstyle]{Definition}
\declaretheorem[sibling=Theorem, style=defstyle]{Remark}
\title[Pullback and restriction to curves]{Pullback of regular singular stratified bundles and restriction to
curves}
\author{Lars Kindler}
\address{Mathematisches Institut\\ Freie Universit\"at Berlin\\ Arnimallee 3\\ 14195
Berlin, Germany}
\email{kindler@math.fu-berlin.de}
\urladdr{http://mi.fu-berlin.de/\textasciitilde kindler/}
\thanks{This work was supported ERC Advanced Grant 226257.}
\begin{document}
\maketitle
\begin{abstract}
	A stratified bundle on a smooth variety $X$ is a vector bundle which is a
	$\mathscr{D}_{X}$-module.
	We show that regular singularity of stratified bundles on smooth
	varieties in positive characteristic is preserved by
	pullback and that regular singularity can be checked on curves, if the ground
	field is large enough.
\end{abstract}
\section{Introduction}
\def\sectionautorefname{Section}

If $X$ is a smooth complex variety, then it is proved in
\cite{Deligne/RegularSingular} that a vector bundle with flat connection
$(E,\nabla)$ on $X$ is regular singular if and only $\phi^*(E,\nabla)$ is regular
singular for all maps $\phi:C\rightarrow X$ with $C$ a smooth complex curve.

In this short note we analyze an analogous statement for vector bundles with
$\mathscr{D}_{X/k}$-action on
smooth $k$-varieties, where $k$ is an algebraically closed field of positive
characteristic $p>0$ and $\mathscr{D}_{X/k}$ the sheaf of differential
operators of $X$ relative to $k$. Vector bundles with such an action are called stratified
bundles, see \cite{Gieseker/FlatBundles}. A notion of regular singularity for stratified bundles was
defined and studied in \emph{loc.~cit.} under the assumption of the
existence of a good compactification, and in \cite{Kindler/FiniteBundles} in
general. We recall this definition in Section \ref{sec:regsing}.

The first result of this article is:

\begin{Theorem}\label{thm:main:curves}
	Let $k$ be an  \emph{uncountable} algebraically closed field of characteristic $p>0$, $X$ a smooth, separated,
	finite type $k$-scheme and $E$ a stratified bundle
	on $X$. Then $E$ is regular singular if and only if $\phi^*E$ is
	regular singular for every $k$-morphism $\phi:C\rightarrow X$ with
	$C$ smooth $k$-curve.
\end{Theorem}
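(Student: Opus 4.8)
The statement is an equivalence whose two directions have quite different flavors. The forward implication — that regular singularity of $E$ forces regular singularity of every restriction $\phi^*E$ — is exactly an instance of the stability of regular singularity under pullback, the other main result advertised in the abstract; I would simply invoke it, applied to each $\phi\colon C\to X$. All the content lies in the converse, which I would attack by contraposition: assuming $E$ is \emph{not} regular singular, I would produce a single smooth curve $C$ and a morphism $\phi\colon C\to X$ for which $\phi^*E$ fails to be regular singular.

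First I would fix a good compactification $X\hookrightarrow\bar X$ with $D=\bar X\setminus X$ a normal crossings divisor and use the intrinsic, boundary-local characterization of regular singularity to locate an irreducible component $D_0\subseteq D$ along whose generic point $\eta$ the bundle $E$ fails to be regular singular. This failure is governed by a ramification-theoretic invariant of $E$ restricted to the punctured henselian trait at $\eta$: regular singularity amounts to tameness, i.e. vanishing of the wild part of the relevant ramification filtration, so non-regularity means that some positive break (slope) occurs. The task then becomes geometric: cut down to a curve crossing $D_0$ transversally near $\eta$ in such a way that this positive slope survives, where regular singularity is governed by the classical one-dimensional theory.

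My plan for the cutting is an induction on $\dim X$ by iterated general hyperplane sections. By a Bertini argument a general member $H$ of a sufficiently ample linear system meets $X$ in a smooth variety and meets $D$ in a normal crossings divisor, with $H\cap D_0$ again a transversal component. The crux is a restriction-preserves-nonregularity statement: for a suitably general $H$ the positive slope of $E$ along $D_0$ is inherited by $E|_{X\cap H}$ along $D_0\cap H$. Iterating down to $\dim X=1$ produces a curve on which $\phi^*E$ is not regular singular, contradicting the hypothesis.

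The decisive point, and the only place the uncountability of $k$ enters, is this last step. For an individual hyperplane the slope need \emph{not} be preserved: there are countably many exceptional conditions under which the restriction could become tame, arising for instance from the infinitely many layers of the Gieseker (Frobenius-descent) description of $E$ and from the successive steps of the ramification filtration, and each such condition cuts out a proper closed subset of the relevant parameter space of hyperplanes. Over an uncountable algebraically closed field a variety is not a countable union of proper closed subvarieties, so some $H$ avoids all exceptional loci simultaneously and preserves the slope. I expect this genericity-against-countably-many-conditions argument, together with the semicontinuity/specialization analysis of the ramification invariant needed to identify the exceptional loci, to be the main obstacle; the Bertini input and the one-dimensional endpoint are comparatively routine.
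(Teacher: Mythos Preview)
Your forward implication matches the paper's. For the converse, however, your plan rests on machinery that is not available. You assert that non-regularity at the generic point $\eta$ of a boundary divisor is detected by a positive slope in a ramification filtration on the punctured henselian trait, and then propose to show this slope survives a generic hyperplane section. For $\ell$-adic sheaves or overconvergent $F$-isocrystals such numerical invariants exist, and for stratified bundles \emph{with finite monodromy} regular singularity does reduce to tameness of an \'etale cover; but for a general stratified bundle there is no slope or break decomposition in the literature, and the paper's definition (existence of a coherent torsion-free $\mathscr{D}_{\overline{X}/k}(\log D)$-extension) does not supply one. Without this invariant your Bertini step is not even well-posed, so the induction cannot start. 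A smaller slip: you fix a good \emph{compactification} of $X$, whose existence is unknown in positive characteristic; one must instead work with a good \emph{partial} compactification witnessing the failure.

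The paper's argument is more elementary and proceeds directly rather than by contraposition. After shrinking so that $\overline{X}=\Spec A$ is affine with \'etale coordinates $x_1,\ldots,x_n$, the boundary $D=V(x_1)$ is smooth irreducible, and $E$ is free, $(X,\overline{X})$-regularity becomes the concrete statement that the pole orders along $x_1$ of the structure coefficients of the logarithmic operators $x_1^m\partial_{x_1}^{(m)}$ acting in a fixed basis are uniformly bounded. The remaining coordinates give a smooth map $\overline{X}\to S=\A^{n-1}_k$ whose closed fibres are smooth curves meeting $D$ transversally; by hypothesis the restriction of $E$ to each such fibre is regular singular, so the pole orders over each closed point $s\in S$ are bounded by some $N_s$. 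A short lemma shows that for each individual function $g$ the locus $\{s\in S:\text{pole order of }g|_{U_s}\leq N\}$ is constructible; intersecting closures over the countably many relevant coefficients produces an increasing chain of closed sets $\mathbf{P}_{\leq N}\subset S$ whose union contains all closed points. Uncountability of $k$ then forces $\mathbf{P}_{\leq N_0}=S$ for some $N_0$, and density plus constructibility upgrades this to a genuine uniform pole bound. The use of uncountability---a countable union of proper closed sets cannot exhaust $S$---is in the same spirit as your sketch, but the countable family is indexed by differential operators and generators of $A$, not by layers of a hypothetical ramification filtration.
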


In
\cite[Sec.~8]{Kindler/FiniteBundles} it is proved that \autoref{thm:main:curves} holds
without the uncountability condition for stratified bundles with finite monodromy. This relies on
work of Kerz, Schmidt and Wiesend, \cite{Kerz/tameness}. For stratified
bundles with arbitrary monodromy, the author does not know at present whether the uncountability
condition of $k$ in \autoref{thm:main:curves} is necessary or not. For this reason we
have to content ourselves with the following general criterion for regular
singularity, which easily follows from \autoref{thm:main:curves}.
\begin{Corollary}\label{cor:arbitraryField}
	If $X$ is a smooth, separated, finite type $k$-scheme, and $E$ a 
	stratified bundle on $X$, then $E$ is regular singular if and only if
	for every algebraic closure $k'$ of a finitely generated extension of
	$k$, and every $k'$-morphism $\phi:C\rightarrow X_{k'}$ with $C$ a
	smooth $k'$-curve, the stratified bundle $\phi^*(E\otimes k')$ on $C$ is
	regular singular.
\end{Corollary}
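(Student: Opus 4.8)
The plan is to reduce to \autoref{thm:main:curves} by base-changing to an uncountable algebraically closed field and then descending curves to finitely generated subfields. Besides \autoref{thm:main:curves}, the only input I need is the invariance of regular singularity under extension of the algebraically closed ground field: if $K\subseteq K'$ are algebraically closed fields and $F$ is a stratified bundle on a smooth separated finite type $K$-scheme $Y$, then $F$ is regular singular on $Y$ if and only if $F\otimes_K K'$ is regular singular on $Y_{K'}$. I use this freely (it can be checked on a normal compactification of $Y$, the formation of differential operators and the defining boundary condition both being compatible with the faithfully flat base change $Y_{K'}\to Y$). I also use repeatedly that pullback of stratified bundles commutes with extension of the ground field, so that for a $k'$-morphism $\phi$ and an inclusion $k'\subseteq K'$ one has $(\phi\times_{k'}K')^*(E\otimes_k K')\cong\big(\phi^*(E\otimes_k k')\big)\otimes_{k'}K'$.

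For the \emph{only if} direction, assume $E$ is regular singular and fix $k'$, the algebraic closure of a finitely generated extension of $k$, together with a $k'$-morphism $\phi:C\to X_{k'}$ with $C$ a smooth $k'$-curve. Choose any uncountable algebraically closed field $\Omega\supseteq k'$. By base-field invariance along $k\subseteq\Omega$ the bundle $E\otimes_k\Omega$ is regular singular on $X_\Omega$, so \autoref{thm:main:curves}, which applies because $\Omega$ is uncountable, shows that $\psi^*(E\otimes_k\Omega)$ is regular singular for \emph{every} $\Omega$-morphism $\psi$ from a smooth $\Omega$-curve; take $\psi=\phi\times_{k'}\Omega$. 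The base-change formula of the previous paragraph identifies $\psi^*(E\otimes_k\Omega)$ with $\big(\phi^*(E\otimes_k k')\big)\otimes_{k'}\Omega$, and base-field invariance along $k'\subseteq\Omega$ then forces $\phi^*(E\otimes_k k')$ to be regular singular.

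For the \emph{if} direction, assume the pullback condition and fix an uncountable algebraically closed $\Omega\supseteq k$. By base-field invariance it is enough to show that $E\otimes_k\Omega$ is regular singular on $X_\Omega$, and by \autoref{thm:main:curves} this amounts to checking that $\psi^*(E\otimes_k\Omega)$ is regular singular for every $\Omega$-morphism $\psi:C\to X_\Omega$ with $C$ a smooth $\Omega$-curve. Writing $\Omega$ as the filtered union of the finitely generated extensions $k_0/k$ contained in it, standard limit and spreading-out arguments produce such a finitely generated $k_0$, a smooth curve $C_0'$ over $k_0$ and a $k_0$-morphism $\phi_0:C_0'\to X_{k_0}$ with $(C,\psi)\cong(C_0',\phi_0)\times_{k_0}\Omega$. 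Setting $k':=\overline{k_0}\subseteq\Omega$ and base-changing along $k_0\subseteq k'$ gives a smooth $k'$-curve $C_0:=C_0'\times_{k_0}k'$ and a $k'$-morphism $\phi:C_0\to X_{k'}$ with $(C,\psi)\cong(C_0,\phi)\times_{k'}\Omega$, where $k'$ is the algebraic closure of a finitely generated extension of $k$. By hypothesis $\phi^*(E\otimes_k k')$ is regular singular, so, exactly as in the previous paragraph, $\psi^*(E\otimes_k\Omega)\cong\big(\phi^*(E\otimes_k k')\big)\otimes_{k'}\Omega$ is regular singular; hence $E\otimes_k\Omega$, and therefore $E$, is regular singular.

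The substantive step is the descent in the \emph{if} direction: one must know that an arbitrary smooth curve over the (possibly enormous) field $\Omega$, together with its map to $X_\Omega$, already descends to the algebraic closure of a finitely generated subextension of $\Omega/k$, and that smoothness and the relative dimension survive this descent. This is precisely a finite-presentation limit argument, and it is what lets the uncountable field be traded for the countable-type fields $k'$ appearing in the statement. The remaining manipulations—base change of pullbacks and base-field invariance of regular singularity—I expect to be routine.
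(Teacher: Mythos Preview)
Your approach is essentially the paper's: pass to an uncountable algebraically closed field, apply \autoref{thm:main:curves} there, and descend curves to (the algebraic closure of) a finitely generated subextension. The paper in fact only writes out the ``if'' direction and carries out exactly this maneuver.

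The one point where you diverge, and where your justification needs adjusting, is the base-field invariance of regular singularity. You say it ``can be checked on a normal compactification of $Y$'', but good compactifications are not known to exist in positive characteristic; this is precisely why the paper works with good \emph{partial} compactifications throughout. Instead of invoking invariance abstractly, the paper opens up the proof of \autoref{prop:curveCriterionPC}: for a fixed good partial compactification $(X,\overline{X})$ one shrinks to $\overline{X}$ affine and $E$ free, and $(X,\overline{X})$-regular singularity becomes the statement that certain pole orders of elements of $A[x_1^{-1}]$ are bounded. That condition is visibly insensitive to extending $k$ to an uncountable $K$, which is exactly the descent step you need (namely $E\otimes\Omega$ regular singular $\Rightarrow$ $E$ regular singular). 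Your faithfully-flat-descent heuristic does recover this direction once it is phrased in terms of good partial compactifications of $X$ (each of which base-changes to one of $X_\Omega$), but the clean way to say it is the pole-order reformulation the paper uses. The ascent direction you invoke for ``only if'' is more delicate for the same reason---an arbitrary good partial compactification of $X_\Omega$ need not descend to $k$---so if you want to keep that argument you should justify it more carefully; the paper sidesteps the issue by not spelling out that direction.
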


In the course of the proof we establish a general result on pullbacks, which
is of independent interest.
\begin{Theorem}\label{thm:main:pullback}
	Let $k$ be an algebraically closed field of characteristic $p>0$ and
	$f:Y\rightarrow X$ a morphism of smooth, separated, finite type
	$k$-schemes. If $E$ is a regular singular stratified bundle on $X$,
	then $f^*E$ is a regular singular stratified bundle on $Y$.

	In other words,
if $\Strat(X)$  denotes
the category of stratified bundles on $X$ and $\Strat^{\rs}(X)$ its full
subcategory with objects the regular singular stratified bundles,
 then the pullback functor
$f^*:\Strat(X)\rightarrow \Strat(Y)$ restricts to a functor
$f^*:\Strat^{\rs}(X)\rightarrow \Strat^{\rs}(Y)$. 
\end{Theorem}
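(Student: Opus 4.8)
The plan is to reduce an arbitrary $f$ to two elementary types of morphism and then exploit the description of regular singularity recalled in Section~\ref{sec:regsing} in terms of logarithmic extensions over a good compactification. I emphasize at the outset that the argument must be logically independent of \autoref{thm:main:curves}: the \emph{only if} direction of that theorem is precisely the special case of the present statement in which $Y=C$ is a curve, so it may not be invoked. Since $X/k$ is separated, the graph $\Gamma_f=(\id_Y,f)\colon Y\to Y\times_k X$ is a closed immersion of smooth $k$-schemes, and $f=\pr_X\circ\Gamma_f$, where $\pr_X\colon Y\times_k X\to X$ is the (smooth) projection. As $f^*=\Gamma_f^*\circ\pr_X^*$ and a composite of functors preserving regular singularity again preserves it, it suffices to treat (i) smooth morphisms and (ii) closed immersions of smooth $k$-schemes separately.

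For (i) I would first reduce to a projection together with an \'etale morphism: Zariski-locally on the source a smooth morphism factors as $Y\to\A^n_X\to X$ with the first map \'etale, so, using that regular singularity is local on the source, it is enough to handle these two cases. For the projection $\pr\colon Y'\times_k X\to X$ choose good compactifications $\bar X\supseteq X$ and $\bar Y'\supseteq Y'$ with normal crossings boundaries $D_X$ and $D_{Y'}$; then $\bar Y'\times\bar X$ is a good compactification of $Y'\times X$ with normal crossings boundary $(D_{Y'}\times\bar X)\cup(\bar Y'\times D_X)$. If $\bar E$ is a logarithmic extension of $E$ along $D_X$, then $\pr_{\bar X}^*\bar E$ is locally free and stable under the logarithmic differential operators: along $\bar Y'\times D_X$ the action is inherited from $\bar E$, while in the fibre directions the stratification is the tautological, relatively flat one coming from the pullback, which extends across $D_{Y'}\times\bar X$ without poles. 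Hence $\pr^*E$ is regular singular, and the \'etale case follows since regular singularity is \'etale-local near the boundary and \'etale maps carry logarithmic differential operators to logarithmic differential operators.

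The real content is (ii). Using locality of regular singularity and the local factorisation of a smooth subvariety as a transversal intersection of smooth divisors, one reduces to the case where $i\colon Y\hookrightarrow X$ realises $Y$ as a smooth divisor. Fixing a good compactification $\bar X$ of $X$ with normal crossings boundary $D_X$ and a logarithmic extension $\bar E$ of $E$, the conclusion would follow from the local computation that, whenever a smooth $\bar Y$ meets $D_X$ transversally, the restriction $\bar E|_{\bar Y}$ is stable under the logarithmic differential operators of $(\bar Y,\,D_X\cap\bar Y)$ and is therefore a logarithmic extension of $i^*E$: in coordinates with $D_X=\{x_1\cdots x_r=0\}$ and $\bar Y=\{x_n=0\}$ transversal, the tangential logarithmic (divided-power) differential operators restrict to logarithmic differential operators on $\bar Y$.

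The main obstacle is exactly the transversality hypothesis: I must arrange that the closure of $Y$ is smooth and meets $D_X$ in a normal crossings divisor. In characteristic $0$ this is embedded resolution of singularities, which is unavailable in characteristic $p$, and I expect this to be the crux. I would handle it by exploiting that regular singularity is intrinsic, i.e.\ independent of the chosen good compactification as recalled in Section~\ref{sec:regsing}, and local along divisorial valuations: one replaces $\bar X$ by a suitable modification separating $\bar Y$ from $D_X$ (for instance by blowing up along the boundary, or via a de Jong alteration), reaching the transversal local model above while checking that regular singularity is insensitive to the replacement. Combining (i), (ii) and the factorization then shows that $f^*$ carries $\Strat^{\rs}(X)$ into $\Strat^{\rs}(Y)$.
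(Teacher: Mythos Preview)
Your factorization through the graph is perfectly reasonable, but the argument after that has a structural gap that undermines both (i) and (ii). In positive characteristic it is \emph{not known} that a smooth $k$-variety admits a good compactification; this is exactly why \autoref{def:gpc} defines regular singularity via \emph{all} good \emph{partial} compactifications. Your treatment of the projection assumes you may ``choose good compactifications $\bar X\supseteq X$ and $\bar Y'\supseteq Y'$'', and your treatment of closed immersions begins by ``fixing a good compactification $\bar X$ of $X$''. Neither is available. Even granting their existence, you would still only have checked $(Y,\overline{Y})$-regular singularity for one particular $\overline{Y}$, whereas the definition requires it for every good partial compactification of $Y$; the independence statement you invoke (``regular singularity is intrinsic'') is precisely what is at stake and is not recalled in Section~\ref{sec:regsing} in the generality you need.

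The paper therefore runs the argument in the opposite direction: given an arbitrary good partial compactification $(Y,\overline{Y})$, one manufactures a good partial compactification $(X,\overline{X})$ to which $f$ extends (after shrinking $\overline{Y}$ in codimension $\geq 2$), and then applies \autoref{prop:logPullback}. This is \autoref{prop:pullbackGPC}. The reduction is to \emph{dominant} morphisms and closed immersions rather than smooth morphisms and closed immersions, but the hard case is the same as yours. For a closed immersion one first uses blow-ups (\autoref{cor:embedded-blow-ups}) to arrange that $i$ extends to a closed immersion $i':\overline{Y}\hookrightarrow X'$ with $i'(\eta)$ of codimension $1$ in its image, and then a concrete desingularization lemma of Gabber (\autoref{lemma:Gabber}) --- repeated blow-ups at closed points of a $1$-dimensional quotient make the conormal graded pieces torsion-free and hence the ambient local ring regular --- to make $i'(\eta)$ a regular point of $X'$ and of $X'\setminus X$. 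Your final paragraph correctly identifies this as the crux, but ``blowing up along the boundary, or via a de Jong alteration'' is not yet an argument: alterations change the function field and do not obviously preserve the closed-immersion situation, and unguided boundary blow-ups need not terminate. Gabber's lemma supplies exactly the missing finiteness.
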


The difficulty in proving this theorem is the unavailability of resolution of
singularities.
Our proof relies on a desingularization result kindly
communicated to H.~Esnault and the author by O.~Gabber.
In \cite{Kindler/thesis}, \autoref{thm:main:pullback}  was only shown in the case that $f$ is
dominant.\\

We conclude the introduction with a brief outline of the article. In \autoref{sec:regsing} we recall
the definition of regular singularity of a stratified bundle via good partial
compactifications. In \autoref{sec:pullback} we prove
\autoref{thm:main:pullback}, and in \autoref{sec:curves} we establish
\autoref{thm:main:curves}.

\vspace{.5cm}
\section*{Acknowledgements} The author wishes to  thank H.~Esnault, M.~Morrow and
K.~R\"ulling for enlightening
discussions, and O.~Gabber for communicating \autoref{lemma:Gabber} to us.

\section{Regular singular stratified bundles}\label{sec:regsing}
Let $k$ denote an algebraically closed field and $X$ a smooth
$k$-variety, i.e.~a smooth, separated, finite type $k$-scheme. If $k$ has
characteristic $0$, then giving a flat connection $\nabla$ on a vector bundle
$E$ on $X$ is equivalent to giving $E$ a left-$\mathscr{D}_{X/k}$-action which
is compatible with its $\mathcal{O}_X$-structure.
Here $\mathscr{D}_{X/k}$ is the sheaf of rings of differential operators of
$X$ relative to $k$ (\cite[\S16]{EGA4}). If $k$ has positive characteristic, the sheaf of rings
$\mathscr{D}_{X/k}$ is still defined, but a flat connection does not
necessarily give rise to a $\mathscr{D}_{X/k}$-module structure on $E$.

\begin{Definition}\label{defn:stratifiedBundle}
	If $X$ is a smooth, separated, finite type $k$-scheme, then a
	\emph{stratified bundle $E$ on $X$} is a 
	left-$\mathscr{D}_{X/k}$-module $E$ which is coherent with respect to the
	induced $\mathcal{O}_X$-structure. The category of such objects is
	denoted by $\Strat(X)$.
\end{Definition}

The usage of the word ``bundle'' is justified as a stratified bundle is automatically
locally free as an $\mathcal{O}_X$-module. See, e.g.,
\cite[2.17]{BerthelotOgus/Crystalline}.  The analogous statement for flat
connections is not true in positive characteristic. The notion of a stratification
goes back to \cite{Grothendieck/Crystals} and a vector bundle on a
\emph{smooth}
$k$-scheme equipped  with a
stratification  relative to $k$ in the sense of \emph{loc.~cit.}~is a stratified bundle in the sense of
\autoref{defn:stratifiedBundle}, and vice versa.

To define regular singularity of a stratified bundle, we introduce some
notation.

\begin{Definition}\begin{itemize}
	\item If $\overline{X}$ is a smooth, separated, finite type $k$-scheme
		and $X\subset \overline{X}$ an open subscheme such that
		$\overline{X}\setminus X$ is the support of a strict normal crossings
		divisor, then the pair $(X,\overline{X})$ is called \emph{good partial
		compactification of $X$}. If in addition $\overline{X}$ is proper over
		$k$, then $(X,\overline{X})$ is called \emph{good compactification of
		$X$}.
	\item If $(Y,\overline{Y})$ and $(X,\overline{X})$ are two good
		partial compactifications, then a \emph{morphism of good
		partial compactifications} is a morphism
		$\bar{f}:\overline{Y}\rightarrow \overline{X}$ such that
		$\bar{f}(Y)\subset X$.
	\item If $(Y,\overline{Y})$, $(X,\overline{X})$ are good partial
		compactifications and if $f:Y\rightarrow X$ is a morphism, then
		we say that \emph{$f$ extends to a morphism of good partial
		compactifications} if there exits a morphism of good partial
		compactifications $\bar{f}:(Y,\overline{Y})\rightarrow
		(X,\overline{X})$, such that $f=\bar{f}|_Y$.\end{itemize}
\end{Definition}

Let $X$ be a smooth, separated, finite type $k$-scheme. If $k$ has
characteristic $0$,  then there
exists a good compactification $(X,\overline{X})$ according to 
Hironaka's theorem on resolution of singularities. By definition, a vector bundle with flat connection $(E,\nabla)$ on $X$ is 
regular singular, if for some (and equivalently for any) good compactification $(X,\overline{X})$, there
exists a torsion free, coherent $\mathcal{O}_{\overline{X}}$-module
$\overline{E}$ extending $E$ and a logarithmic connection
$\overline{\nabla}:\overline{E}\rightarrow
\overline{E}\otimes_{\mathcal{O}_{\overline{X}}}
\Omega^1_{\overline{X}/k}(\log \overline{X}\setminus X)$ extending $\nabla$.

If $k$ has positive characteristic then it is unknown whether every smooth
$X$ admits a good compactification. We work with \emph{all} good
\emph{partial} compactifications instead.

\begin{Definition}\label{def:gpc}
		Let $k$ be an algebraically closed field of positive
		characteristic and $X$ a smooth, separated, finite type
		$k$-scheme.
		\begin{enumerate}[label={{(\alph*)}}]
			\item If $E$ is a stratified bundle on $X$ and $(X,\overline{X})$ a good
				partial compactification, then $E$ is called
		\emph{$(X,\overline{X})$-regular singular} if there exists a
		$\mathscr{D}_{\overline{X}/k}(\log \overline{X}\setminus X)$-module
		$\overline{E}$, which is coherent and torsion free as an
		$\mathcal{O}_{\overline{X}}$-module, such that $E\cong
		\overline{E}|_X$ as stratified bundles. Here
		$\mathscr{D}_{\overline{X}/k}(\log \overline{X}\setminus X)$
		is the sheaf of differential operators with logarithmic poles
		along the  underlying normal crossings divisor $\overline{X}\setminus X$, as defined in
		\cite[Sec.~3]{Gieseker/FlatBundles}; see also
		\autoref{rem:logDiffOps}.
	\item A stratified bundle $E$ is called \emph{regular singular} if $E$ is
		$(X,\overline{X})$-regular singular for all good partial
		compactifications $(X,\overline{X})$. We write
		$\Strat^{\rs}(X)$ for the full subcategory of $\Strat(X)$ with
		objects the regular singular stratified bundles.
	\end{enumerate}
\end{Definition}
\begin{Remark}\label{rem:logDiffOps}
	A good partial compactification $(X,\overline{X})$ gives rise to a logarithmic
	structure on $\overline{X}$ (\cite{Kato/LogSchemes}), and the associated log-scheme is a log-scheme
over $\Spec k$ equipped with its trivial log-structure.  Associated to this
morphism of log-schemes is a sheaf of logarithmic differential operators
$\mathscr{D}_{(X,\overline{X})/k}$, which agrees with the sheaf of rings
$\mathscr{D}_{\overline{X}/k}(\log \overline{X}\setminus X)$ from
\cite[Sec.~3]{Gieseker/FlatBundles}. In local coordinates, if $x\in
\overline{X}$ is a closed point and $x_1,\ldots, x_n$ \'etale coordinates around $x$
such that in a neighborhood of $x$ the normal crossings divisor $\overline{X}\setminus X$ is defined by
$x_1\cdot\ldots\cdot x_r=0$ for some $r\leq n$, then
$\mathscr{D}_{\overline{X}/k}(\log \overline{X}\setminus X)$ is spanned by
the operators 
\[x_1^s\partial_{x_1}^{(s)},\ldots,
	x_r^s\partial_{x_r}^{(s)},\partial_{x_{r+1}}^{(s)},\ldots,
	\partial_{x_n}^{(s)}, s\in\Z_{\geq 0}\]
	where  $\partial_{x_i}^{(s)}$ is the differential operator such that
	\[\partial_{x_i}^{(s)}(x_j^t)=\begin{cases}0 & i\neq j\\
			\binom{t}{s}x_j^{t-s}& i=j.\end{cases}\]
We refer to \cite[Sec.~3]{Kindler/FiniteBundles} for more details.
\end{Remark}

The notion of regular singularity for stratified bundles is studied in
\cite[Sec.~3]{Gieseker/FlatBundles} for smooth varieties $X$ which admit a good
compactification and in 
\cite{Kindler/FiniteBundles} in general.

We conclude this section by recalling the following fact about regular singularity,
which we will use repeatedly in the sequel.
\begin{Proposition}\label{prop:genericity}
	Let $E$ be a stratified bundle on a smooth, separated, finite type
	$k$-scheme $X$.
	\begin{enumerate}[label={\emph{(\alph*)}}, ref={(\alph*)}]
		\item\label{item:prop:genericity1} If $(X,\overline{X})$ is a good partial compactification
			then $E$ is $(X,\overline{X})$-regular singular if and
			only if there exists an open subset
			$\overline{U}\subset \overline{X}$ with
			\[\codim_{\overline{X}}\left(\overline{X}\setminus
				\left(X\cup \overline{U}\right)\right)\geq
			2,\]
			such that $E|_{\overline{U}\cap X}$ is $(X\cap
			\overline{U},\overline{U})$-regular singular.
		\item\label{item:prop:genericity2} If there exists a dense open subset $U\subset X$ such
	that $E|_U$ is regular singular, then $E$ is regular singular.
	\end{enumerate}
\end{Proposition}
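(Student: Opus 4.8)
The statement (a) splits into two implications, of which the forward one is immediate: if $E$ is $(X,\overline{X})$-regular singular, I would take $\overline{U}=\overline{X}$, so that the complement $\overline{X}\setminus(X\cup\overline{U})$ is empty and the condition is vacuously met. For the converse the plan is to produce a global logarithmic extension of $E$ on $\overline{X}$ out of the given extension on $\overline{U}$, by first gluing and then extending across a closed set of codimension $\geq 2$.

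Concretely, write $D=\overline{X}\setminus X$ and let $\overline{E}_{\overline{U}}$ be the coherent, torsion free $\mathscr{D}_{\overline{U}/k}(\log D\cap\overline{U})$-module extending $E|_{X\cap\overline{U}}$. Set $W=X\cup\overline{U}$. Since $X\cap\overline{U}\subseteq X$ is disjoint from $D$, the logarithmic structure is trivial there and $\overline{E}_{\overline{U}}$ restricts on $X\cap\overline{U}$ to $E|_{X\cap\overline{U}}$ as an ordinary $\mathscr{D}$-module; hence $E$ and $\overline{E}_{\overline{U}}$ glue to a coherent, torsion free $\mathscr{D}_{W/k}(\log D\cap W)$-module $\mathcal{F}$ on $W$ with $\mathcal{F}|_X=E$. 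Now let $j:W\hookrightarrow\overline{X}$ be the open immersion; by hypothesis $\overline{X}\setminus W$ has codimension $\geq 2$. I would replace $\mathcal{F}$ by its reflexive hull (harmless, as $E$ is already locally free, so nothing changes over $X$) and set $\overline{E}=j_*\mathcal{F}$. Since $\overline{X}$ is smooth, hence normal, the pushforward of a reflexive sheaf across a closed set of codimension $\geq 2$ is again coherent and reflexive, so $\overline{E}$ is a coherent torsion free $\mathcal{O}_{\overline{X}}$-module restricting to $\mathcal{F}$ on $W$, in particular to $E$ on $X$. The same Hartogs-type extension identifies $\mathscr{D}_{\overline{X}/k}(\log D)$ with $j_*\mathscr{D}_{W/k}(\log D\cap W)$ (each level being locally free), so the logarithmic $\mathscr{D}$-action on $\mathcal{F}$ pushes forward to make $\overline{E}$ a $\mathscr{D}_{\overline{X}/k}(\log D)$-module, the module axioms persisting by torsion freeness since they hold on the dense open $W$. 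This shows $E$ is $(X,\overline{X})$-regular singular. I expect the verification that both the sheaf and its logarithmic $\mathscr{D}$-structure extend across codimension $\geq 2$, i.e.\ the normality and reflexivity input, to be the technical heart of (a).

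For (b) the plan is to deduce it from (a). Fix an arbitrary good partial compactification $(X,\overline{X})$, put $Z=X\setminus U$, a closed subset of $X$ with $\codim_X Z\geq 1$, and let $\overline{Z}$ be its closure in $\overline{X}$. Set $\overline{V}=\overline{X}\setminus\overline{Z}$. Then $\overline{V}\cap X=U$ and $\overline{V}\setminus U=\overline{V}\cap D$ is a strict normal crossings divisor in the smooth scheme $\overline{V}$, so $(U,\overline{V})$ is a good partial compactification of $U$; since $E|_U$ is regular singular, it is in particular $(U,\overline{V})$-regular singular. It then remains to verify the codimension condition of (a) for $\overline{U}:=\overline{V}$: one computes $\overline{X}\setminus(X\cup\overline{V})=\overline{Z}\cap D$, and because no component of $D$ can lie in $\overline{Z}$ — the generic points of the components of $\overline{Z}$ lie in $Z\subseteq X$, while those of $D$ lie in $\overline{X}\setminus X$ — this intersection has codimension $\geq 2$ in $\overline{X}$. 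Applying (a) with $\overline{U}=\overline{V}$ yields that $E$ is $(X,\overline{X})$-regular singular, and as $(X,\overline{X})$ was arbitrary, $E$ is regular singular. The only delicate point here is the codimension count, which rests on the observation that the closure of a subset of $X$ contains no component of the boundary divisor.
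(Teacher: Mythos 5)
Your argument is correct. For part (a) the paper gives no proof of its own --- it simply cites \cite{Kindler/FiniteBundles}, Prop.~4.3 --- whereas you sketch the expected direct argument: glue the given logarithmic extension over $\overline{U}$ with $E$ over $X$ along $X\cap\overline{U}$ (where the log structure is trivial), pass to the reflexive hull, and push forward across the complement of $W=X\cup\overline{U}$, which has codimension $\geq 2$, using normality of $\overline{X}$. This is indeed the substance of the cited result, and the point you flag as the technical heart (coherence of $j_*$ of a reflexive sheaf across codimension $\geq 2$, and the Hartogs-type extension of the $\mathscr{D}_{\overline{X}/k}(\log\overline{X}\setminus X)$-action) is exactly where the remaining work lies; your reduction to the reflexive hull is the right move, since $E$ is locally free over $X$ and nothing changes there. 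For part (b) your proof is essentially the paper's: both produce an open $\overline{U}\subset\overline{X}$ with $\overline{U}\cap X=U$ containing every generic point of $\overline{X}\setminus X$, and then invoke part (a). You do this in one step by deleting from $\overline{X}$ the closure of $X\setminus U$, while the paper works with a neighborhood $\overline{U}'_i$ of each boundary generic point $\eta_i$ and deletes the closure of $(\overline{U}'_i\cap X)\setminus U$; the two constructions rest on the same codimension count, namely that the closure of the nowhere dense closed set $X\setminus U$ meets the boundary divisor in codimension $\geq 2$ and hence avoids its generic points. Your version is, if anything, slightly cleaner to state.
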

\begin{proof}
	\begin{enumerate}[label={{(\alph*)}}]
	\item This is \cite[Prop.~4.3]{Kindler/FiniteBundles}.
		\item Assume that $E|_U$ is regular singular. It follows from
			the first part of this proposition that all we have
			to show is that for any good partial compactification
			$(X,\overline{X})$ there exists an open subset
			$\overline{U}\subset \overline{X}$ containing all
			generic points of $\overline{X}\setminus X$, such that
			$(U,\overline{U})$ is a good partial compactification.
			Write $\eta_1,\ldots, \eta_d\in \overline{X}$ for the
			codimension $1$ points
			not contained in $X$. Let $\overline{U}'_i$ be an open
			neighborhood of $\eta_i$ and $Z_i$ the closure of
			$(\overline{U}_i'\cap X)\setminus U$ in
			$\overline{X}$. Defining 
			$\overline{U}_i:=(U\cup \overline{U}'_i)\setminus
			Z_i$, the open subset
			$\overline{U}:=\bigcup_{i=1}^d \overline{U}_i\subset
			\overline{X}$  does the job (note that $Z_i\cap
			U=\emptyset$).
	\end{enumerate}
\end{proof}

\begin{Remark}
Once we have proved \autoref{thm:main:pullback}, we will also know that if $E$ is a regular singular
stratified bundle on $X$ then $E$ is regular singular when restricted to any open
subset.
\end{Remark}

%
%
%
%
%

\section{Pullback of regular singular stratified bundles}\label{sec:pullback}
In this section we construct the pullback functor
for regular singular stratified bundles.  We first recall a basic fact, which
is obvious from the perspective of log-schemes.

\begin{Proposition}[{\cite[Prop.~4.4]{Kindler/FiniteBundles}}]\label{prop:logPullback}
	Let $k$ be an algebraically closed field of positive characteristic
	and $\bar{f}:(Y,\overline{Y})\rightarrow (X,\overline{X})$ a morphism
	of good partial compactifications over $k$ (\autoref{def:gpc}). Write
	$f:=\bar{f}|_Y:Y\rightarrow X$. If $E$
	is an $(X,\overline{X})$-regular singular stratified bundle on $X$,
	then $f^*E$ is $(Y,\overline{Y})$-regular singular.
\end{Proposition}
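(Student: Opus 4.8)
The plan is to simply pull back the logarithmic extension $\overline{E}$ along $\bar f$ and, after removing torsion, use the result as a witness for the $(Y,\overline{Y})$-regular singularity of $f^*E$. The point is that the hypothesis $\bar f(Y)\subset X$ is exactly what turns $\bar f$ into a morphism of the associated log-schemes: it forces $\bar f^{-1}(\overline{X}\setminus X)\subset \overline{Y}\setminus Y$, so that the boundary divisor on $\overline{X}$ pulls back into the boundary divisor on $\overline{Y}$ and the log-structures of \autoref{rem:logDiffOps} become compatible. From the log-scheme perspective this compatibility is all that is needed, and the construction below is nothing but the pullback of logarithmic stratifications along a morphism of log-schemes.

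Concretely, I would first exhibit the pullback on log differential forms. Working in local coordinates as in \autoref{rem:logDiffOps}, let $x_1,\dots,x_n$ be \'etale coordinates on $\overline{X}$ with $\overline{X}\setminus X=\{x_1\cdots x_r=0\}$. Since every $x_i$ with $i\leq r$ is invertible on $X$ and $\bar f(Y)\subset X$, each function $\bar f^* x_i$ with $i\leq r$ is a regular function on $\overline{Y}$ that is invertible on $Y$; hence its divisor is effective and supported on $\overline{Y}\setminus Y$, so
\[
\dlog(\bar f^* x_i)=\frac{d(\bar f^* x_i)}{\bar f^* x_i}\in \Omega^1_{\overline{Y}/k}(\log \overline{Y}\setminus Y).
\]
This produces a natural $\mathcal O_{\overline{Y}}$-linear map $\bar f^*\Omega^1_{\overline{X}/k}(\log \overline{X}\setminus X)\to \Omega^1_{\overline{Y}/k}(\log \overline{Y}\setminus Y)$, dually a map of log tangent sheaves, and more generally a compatible pullback of the higher divided-power logarithmic operators of \autoref{rem:logDiffOps}. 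Phrased invariantly, a morphism of log-schemes induces a pullback functor on $\mathcal O$-coherent modules carrying an integrable logarithmic stratification, via Grothendieck's description using the logarithmic infinitesimal neighbourhoods of the diagonal; applying it to $\overline{E}$ yields a $\mathscr{D}_{\overline{Y}/k}(\log \overline{Y}\setminus Y)$-module $\overline{F}_0:=\bar f^*\overline{E}$, which is coherent over $\mathcal O_{\overline{Y}}$ because $\overline E$ is.

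The sheaf $\overline{F}_0$ need not be torsion free, so I would replace it by $\overline{F}:=\overline{F}_0/\overline{F}_0^{\mathrm{tors}}$. The torsion subsheaf of a coherent module with logarithmic stratification is a sub-$\mathscr{D}_{\overline{Y}/k}(\log)$-module: it is the kernel of the localization map $\overline{F}_0\to\prod_{\eta}(\overline{F}_0)_{\eta}$ at the generic points $\eta$ of $\overline{Y}$, and this map is $\mathscr{D}_{\overline{Y}/k}(\log)$-linear since differential operators extend canonically to localizations (equivalently, the Leibniz rule for a logarithmic operator $\theta$ and a section killed by a nonzerodivisor $g$ shows $\theta(m)$ is again killed by a power of $g$). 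Hence $\overline{F}$ is a coherent, torsion free $\mathscr{D}_{\overline{Y}/k}(\log \overline{Y}\setminus Y)$-module. Finally, restriction to $Y$ commutes with pullback and the log-structure is trivial over $Y$, so $\overline{F}_0|_Y\cong f^*(\overline{E}|_X)=f^* E$ as stratified bundles; since $f^* E$ is locally free it is torsion free, whence the torsion of $\overline{F}_0$ is supported on $\overline{Y}\setminus Y$ and $\overline{F}|_Y\cong f^* E$ as well. Thus $\overline{F}$ witnesses the $(Y,\overline{Y})$-regular singularity of $f^* E$.

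The step that is only \emph{apparently} routine, and the one I would expect to require the most care, is the functoriality of the logarithmic stratification in positive characteristic: one must check that the entire system of divided-power logarithmic operators $x_i^s\partial_{x_i}^{(s)}$ and $\partial_{x_j}^{(s)}$ pulls back compatibly, not merely the first-order log derivations $x_i\partial_{x_i}$. This is precisely where the log-scheme formalism does the work, and it is the technical content hidden behind the phrase ``obvious from the perspective of log-schemes''; the torsion bookkeeping and the compatibility with restriction to $Y$ are then formal consequences.
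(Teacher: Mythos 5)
The paper does not prove this proposition at all---it is recalled from \cite[Prop.~4.4]{Kindler/FiniteBundles} with only the remark that it is ``obvious from the perspective of log-schemes''---and your argument is a correct expansion of exactly that remark: the condition $\bar f(Y)\subset X$ makes $\bar f$ a morphism of the associated log-schemes, the logarithmic stratification of $\overline{E}$ pulls back functorially via the log infinitesimal neighbourhoods (this, as you rightly flag, is the only non-formal step in characteristic $p$, where first-order data do not suffice), and quotienting $\bar f^*\overline{E}$ by its torsion (a $\mathscr{D}_{\overline{Y}/k}(\log)$-submodule, supported on $\overline{Y}\setminus Y$ since $f^*E$ is locally free) produces the required coherent torsion-free logarithmic extension of $f^*E$. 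So your proposal matches the intended proof; no gaps.
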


Next, we show that in order to prove \autoref{thm:main:pullback}, it suffices to study dominant morphisms and closed
immersions separately.

\begin{Proposition}\label{prop:closedOrDominant}
	\autoref{thm:main:pullback} is true, if and only if it is true for all
	closed immersions and all dominant morphisms. \end{Proposition}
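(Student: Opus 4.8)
The plan is to factor an arbitrary morphism $f:Y\to X$ of smooth $k$-schemes into a composition of a closed immersion followed by a dominant morphism, and then use the fact that the pullback functor is compatible with composition. Since $\autoref{thm:main:pullback}$ asserts that $f^*$ preserves regular singularity, and since pullback along a composite $g\circ h$ equals $h^*\circ g^*$, once we know the theorem for closed immersions and for dominant morphisms we can conclude it for the composite.

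To produce such a factorization, I would use the graph of $f$. Consider the morphism $(f,\id_Y):Y\to X\times_k Y$, i.e. the graph morphism $\Gamma_f$. Because $X$ is separated over $k$, the graph $\Gamma_f$ is a closed immersion, and $Y$ is smooth, $X\times_k Y$ is smooth (as a product of smooth $k$-schemes), so this is a closed immersion of smooth $k$-schemes. Composing with the projection $\pr_X:X\times_k Y\to X$ recovers $f=\pr_X\circ \Gamma_f$. The projection $\pr_X$ is dominant provided $Y\neq\emptyset$, since it is surjective (indeed smooth and surjective, being a base change of the structure morphism $Y\to\Spec k$). Thus $f$ factors as
\[
	Y\xrightarrow{\;\Gamma_f\;} X\times_k Y\xrightarrow{\;\pr_X\;} X,
\]
a closed immersion followed by a dominant morphism, both between smooth separated finite-type $k$-schemes.

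Granting the theorem in these two cases, the argument concludes as follows. Let $E$ be a regular singular stratified bundle on $X$. By the dominant case applied to $\pr_X$, the pullback $\pr_X^*E$ is a regular singular stratified bundle on $X\times_k Y$. By the closed-immersion case applied to $\Gamma_f$, the pullback $\Gamma_f^*(\pr_X^*E)$ is regular singular on $Y$. Since $f^*E\cong \Gamma_f^*\pr_X^*E$ by functoriality of pullback, we deduce that $f^*E$ is regular singular, which is exactly $\autoref{thm:main:pullback}$ for the general morphism $f$. The converse implication is trivial, as closed immersions and dominant morphisms are special cases of arbitrary morphisms.

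The main point requiring care, rather than a genuine obstacle, is verifying that the intermediate scheme $X\times_k Y$ is again a smooth, separated, finite-type $k$-scheme so that it is a legitimate object of the theory, and that $\pr_X$ is genuinely dominant; the latter fails only in the degenerate case $Y=\emptyset$, where the statement is vacuous. I expect no difficulty with the functoriality $f^*\cong\Gamma_f^*\circ\pr_X^*$, which is a standard property of pullback of $\mathscr{D}$-modules along composable morphisms. Thus the reduction is essentially formal once the graph factorization is in place.
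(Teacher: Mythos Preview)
Your argument is correct and in fact cleaner than the one in the paper. The paper factors $f$ through its scheme-theoretic image $Z\hookrightarrow X$, obtaining a dominant map $Y\to Z$ followed by a closed immersion $Z\hookrightarrow X$. The difficulty there is that $Z$ need not be smooth, so the paper passes to a dense open $U\subset X$ over which $Z$ becomes regular, applies the two special cases to the chain $V\to Z\cap U\hookrightarrow U\hookrightarrow X$, and then invokes \autoref{prop:genericity} to propagate regular singularity from the dense open $V=f^{-1}(U)$ back to all of $Y$.

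Your graph factorization $Y\xrightarrow{\Gamma_f} X\times_k Y\xrightarrow{\pr_X} X$ sidesteps this entirely: the intermediate scheme $X\times_k Y$ is automatically smooth, separated and of finite type, so no open restriction and no appeal to \autoref{prop:genericity} is needed. The price is that the intermediate scheme has larger dimension, but this is irrelevant for the reduction. Note also that the order of the two special maps is reversed relative to the paper (closed immersion first, then dominant), which of course makes no difference for the conclusion.
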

\begin{proof}
	Without loss of generality we may assume that $X$ and $Y$ are
	connected.
	Let $f:Y\rightarrow X$ be as in \autoref{thm:main:pullback}, and let
	$i:Z\hookrightarrow X$ be the closed immersion given by
	the scheme theoretic image of $f$. Since $Y$ is reduced, so is $Z$. We
	factor $f$ as $Y\xrightarrow{g} Z\xrightarrow{i} X$. Note that $g$ is
	dominant. Since $Z$ is reduced, there exists an open subscheme
	$U\subset X$, such that $U\cap Z$ is regular. 	Define
	$V:=f^{-1}(U)$, and 
	consider the sequence of maps
	\[V\xrightarrow{g|_V}Z\cap U \xrightarrow{i|_{Z\cap U}}U\xrightarrow{j} 
		X\]
	where $j:U\hookrightarrow X$ is the open immersion.
	The two outer maps are dominant, the middle map is a closed
	immersion and all four varieties are regular. We apply the assumption
	of this proposition from right to left. 

	Let $E$ be a regular singular stratified bundle on $X$. By assumption
	$E|_U$ is regular singular on $U$, then $(i|_{Z\cap U})^*E|_U$ is
	regular singular on $Z\cap U$ and finally $g|_V^*(i_{Z\cap
	U})^*E|_U=f|_V^* E|_U$ is regular singular on $V$. According to
	\autoref{prop:genericity} this means that $f^*E$ is regular singular.
\end{proof}

From this proposition together with \autoref{prop:logPullback} we see directly that to prove
\autoref{thm:main:pullback}, it suffices to prove the following statement.
\begin{Proposition}\label{prop:pullbackGPC}
	Let $k$ be an algebraically closed field and $f:Y\rightarrow X$ a
	morphism of smooth, separated, finite type $k$-schemes.
	\emph{Assume that $f$ is either dominant or a closed immersion.}
	
	If
	$(Y,\overline{Y})$ is a good partial compactification, then there
	exist
	\begin{itemize}
		\item an open subset $\overline{V}\subset \overline{Y}$ containing
	all generic points of $\overline{Y}\setminus Y$, and
\item a good partial 
	compactification $(X,\overline{X})$, 
	such that $f$ induces a morphism
	of good partial compactifications
	\[\bar{f}:(\overline{V}\cap Y, \overline{V})\rightarrow
		(X,\overline{X}).\]
\end{itemize}
\end{Proposition}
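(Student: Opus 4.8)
The plan is to split the statement into two independent tasks: producing a good partial compactification $(X,\overline X)$, and extending $f$ to a morphism $\overline V\to\overline X$ over a neighbourhood $\overline V$ of the codimension-one points of $\overline Y\setminus Y$. The first observation is that much of the claimed conclusion is automatic. Whatever open $\overline V\subset \overline Y$ we end up producing, the pair $(\overline V\cap Y,\overline V)$ is a good partial compactification: $\overline V$ is smooth as an open subscheme of the smooth scheme $\overline Y$, and $\overline V\cap(\overline Y\setminus Y)$ is the restriction to $\overline V$ of a strict normal crossings divisor, hence again strict normal crossings. Similarly, since the extension $\bar f$ will restrict to $f$ on $\overline V\cap Y\subset Y$ and $f(Y)\subset X$, the required condition $\bar f(\overline V\cap Y)\subset X$ holds automatically. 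So the real content is the construction of $\overline X$ together with the extension of $f$ across the boundary.

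For the extension I would invoke the valuative criterion of properness. We may assume $\overline Y$ irreducible; let $\eta_1,\dots,\eta_d$ be the generic points of $\overline Y\setminus Y$. These are finitely many codimension-one points, and since $\overline Y$ is smooth each local ring $R_i:=\mathcal O_{\overline Y,\eta_i}$ is a discrete valuation ring with $\Frac(R_i)=k(Y)$. Choose a Nagata compactification $X\hookrightarrow \overline X_0$ with $\overline X_0$ proper over $k$, and let $\bar f_0:\overline Y\dashrightarrow\overline X_0$ be the rational map induced by $f$ followed by $X\hookrightarrow\overline X_0$. By properness of $\overline X_0$, the map on the generic point $\Spec k(Y)\to\overline X_0$ extends over each $R_i$ to $\Spec R_i\to\overline X_0$; hence $\bar f_0$ is defined at every $\eta_i$, and therefore on some open $\overline V_0\subset\overline Y$ containing $Y$ and all the $\eta_i$. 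Set $\xi_i:=\bar f_0(\eta_i)\in\overline X_0$.

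The obstacle is that $\overline X_0$ is in general singular, and $\overline X_0\setminus X$ need not be a normal crossings divisor at the finitely many points $\xi_i$; in characteristic $p$ we cannot repair this by resolution of singularities. This is precisely where I would apply Gabber's desingularization result (\autoref{lemma:Gabber}): it provides, at least in a neighbourhood of the finitely many points $\xi_i$, a proper birational morphism $\pi:\overline X\to\overline X_0$ which is an isomorphism over $X$ and for which, after discarding a closed subset disjoint from the lifts of the $\xi_i$, the pair $(X,\overline X)$ is a good partial compactification, i.e.\ $\overline X$ is smooth and $\overline X\setminus X$ is strict normal crossings. Because $\pi$ is proper and an isomorphism over $X$, the valuative criterion applies once more: for each $i$, the composite $\Spec R_i\to\overline X_0$ lifts uniquely along $\pi$ to $\Spec R_i\to\overline X$, since its generic point lands in $X$, where $\pi$ is invertible. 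Consequently $\bar f_0$ lifts to a morphism $\bar f$ defined on an open $\overline V\subset\overline Y$ still containing all the $\eta_i$ and mapping into $\overline X$.

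I would then take $\overline V$ to be the open locus of definition of this lift. It contains $Y$ together with every generic point of $\overline Y\setminus Y$, and by the first paragraph $\bar f:(\overline V\cap Y,\overline V)\to(X,\overline X)$ is the desired morphism of good partial compactifications. The same argument covers both cases of the hypothesis, the only difference being the generic behaviour of $\bar f_0$ (dominant onto $X$ in the dominant case, dominant onto the image subvariety in the closed-immersion case); in either case the discrete valuation rings $R_i$ and the valuative criterion supply the extension. The step I expect to carry all the difficulty is the construction of $\overline X$ via \autoref{lemma:Gabber}: it is exactly the local-near-finitely-many-points substitute for the resolution of singularities that is unavailable in positive characteristic.
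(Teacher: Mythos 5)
Your framing is sound up to the decisive step: the reductions in your first paragraph are correct, and extending $f$ over the codimension-one points $\eta_i$ of $\overline Y\setminus Y$ into a proper compactification $\overline X_0$ of $X$ via the valuative criterion (equivalently, after discarding a closed subset of codimension $\geq 2$ from $\overline Y$) is exactly how the paper begins. The gap is in the step you yourself identify as carrying all the difficulty. You assert that \autoref{lemma:Gabber} supplies a proper birational $\pi:\overline X\to\overline X_0$, an isomorphism over $X$, such that $(X,\overline X)$ is a good partial compactification near the points $\xi_i$. Gabber's lemma does no such thing: it is not a local resolution statement for $\overline X_0$ near a point. Its input is a noetherian local domain $\mathcal{O}$ \emph{together with} a prime $\mathfrak{p}$ such that $\mathcal{O}/\mathfrak{p}$ is one-dimensional --- an embedded curve germ through the point --- and its output is that blowing up repeatedly along that curve makes the curve regular with torsion-free conormal graded pieces, and makes the ambient local ring regular at the closed point of the curve \emph{provided} $\mathcal{O}_{\mathfrak{p}}$ was regular to begin with. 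It says nothing about $\overline X_0\setminus X$ becoming a normal crossings divisor. In the closed-immersion case the paper must first run \autoref{cor:embedded-blow-ups} (via \cite[Ch.~8, Ex.~3.14]{Liu/2002}) to make $i'(\eta)$ a codimension-one point of the closure of $i'(\overline Y)$, so that the surjection $\mathcal{O}_{X',i'(\eta)}\twoheadrightarrow\mathcal{O}_{\overline Y,\eta}$ furnishes the prime $\mathfrak{p}$ that Gabber's lemma needs; and even after Gabber yields regularity of $\mathcal{O}_{X',i'(\eta)}$, a further explicit argument (a regular system of parameters $h_0,\dots,h_n$, the expression $g=uh_0^m+\sum_i a_ih_i$ for the local equation of the boundary, and up to $m$ additional blow-ups) is required to make $X'\setminus X$ regular at $i'(\eta)$ and transverse to $i'(\overline Y)$. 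None of this is supplied or replaced by your argument.

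Relatedly, your claim that the same argument covers both cases does not hold. In the dominant case the local homomorphism $\mathcal{O}_{\overline X_0,\xi_i}\to R_i$ is injective rather than surjective, so there is no prime with one-dimensional quotient and Gabber's lemma cannot even be invoked; the paper instead uses \cite[Ex.~8.3.16]{Liu/2002}, a statement about divisorial valuations (iterated blow-ups of the center of the valuation eventually make that center a normal codimension-one point), and then takes $\overline X$ to be a small neighbourhood of the resulting point. This is precisely why the paper separates the two cases in \autoref{prop:closedOrDominant}. What your outline actually requires is embedded resolution of the pair $(\overline X_0,\overline X_0\setminus X)$ near finitely many points, which is unavailable in positive characteristic; the content of the paper's proof is the substitute for it, and that substitute is missing from your proposal.
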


The remainder of this section is devoted to the proof of
\autoref{prop:pullbackGPC}. We first treat the dominant case
(\autoref{prop:pullback-dominant}), then the case of a
closed immersion (\autoref{lemma:pull-back-closed-immersion}).

\begin{Lemma}
\label{prop:pullback-dominant}

	\autoref{prop:pullbackGPC} is true for dominant
	morphisms $f:Y\rightarrow X$.
\end{Lemma}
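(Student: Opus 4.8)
The plan is to reduce the statement to two ingredients: the behaviour of divisorial valuations under a dominant morphism, and a single desingularization input for the target. By \autoref{prop:closedOrDominant} I may assume $X$, $Y$, and hence $\overline{Y}$, to be integral, and I write $K=k(Y)=k(\overline{Y})$. Let $\eta_1,\ldots,\eta_r$ be the generic points of the boundary divisor $\overline{Y}\setminus Y$. Since $\overline{Y}$ is smooth, each local ring $\O_{\overline{Y},\eta_i}$ is a discrete valuation ring; I write $v_i$ for the corresponding valuation of $K$. As $f$ is dominant it induces an inclusion $k(X)\hookrightarrow K$, and I set $w_i:=v_i|_{k(X)}$.

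The first step is to analyse the restricted valuations $w_i$. If $w_i$ is trivial, then $\O_{\overline{Y},\eta_i}$ contains $k(X)$, so the rational map $g$ induced by $f$ sends $\eta_i$ to the generic point of $X$, and no boundary component of the target is needed for $\eta_i$. If $w_i$ is nontrivial, its value group is a nonzero subgroup of $\Z$, so $w_i$ is a rank-one discrete valuation of $k(X)$, and I expect an Abhyankar-inequality computation to show that it is in fact \emph{divisorial}. Writing $d=\dim Y$ and $e=\dim X$, and using that $v_i$ is divisorial (so $\trdeg_k\kappa(v_i)=d-1$) together with the bound $\trdeg_{\kappa(w_i)}\kappa(v_i)\leq \trdeg_{k(X)}K=d-e$ for the extension $K/k(X)$, the tower formula gives $\trdeg_k\kappa(w_i)\geq e-1$; since Abhyankar's inequality supplies the reverse bound, $\trdeg_k\kappa(w_i)=e-1$ and $w_i$ is divisorial.

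With this in hand, the heart of the proof is to realise the finitely many divisorial valuations $w_i$ simultaneously as boundary components of a smooth partial compactification of $X$. Concretely, I would invoke \autoref{lemma:Gabber} (applied to the finitely many $w_i$, if necessary after passing to a common dominating model) to produce a smooth, separated, finite type $k$-scheme $\overline{X}$ containing $X$ as a dense open subscheme and such that each nontrivial $w_i$ is the order of vanishing along a prime divisor $D_i\subset\overline{X}$ with generic point $\xi_i$. Removing the unwanted boundary components and the pairwise intersections $D_i\cap D_j$ — all contained in $\overline{X}\setminus X$ and of codimension $\geq 2$ — I may arrange that $\overline{X}\setminus X$ is the support of a strict normal crossings divisor while retaining $X$ and all the $\xi_i$, so that $(X,\overline{X})$ is a good partial compactification. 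This desingularization step, for which resolution of singularities is unavailable in positive characteristic, is the main obstacle, and it is exactly where Gabber's lemma is indispensable.

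Finally, I would check that $f$ extends over the $\eta_i$. The rational map $g$ is defined on an open subset of $\overline{Y}$ containing $Y$; at a point $\eta_i$ with trivial $w_i$ its center is the generic point of $X$, while for nontrivial $w_i$ the equality $\O_{\overline{Y},\eta_i}\cap k(X)=\O_{\overline{X},\xi_i}$ yields a local morphism $\Spec\O_{\overline{Y},\eta_i}\to\overline{X}$ whose closed point maps to $\xi_i$, and this spreads out to a morphism on an open neighbourhood of $\eta_i$ that is compatible with $g$. Hence $g$ is defined at every $\eta_i$, and letting $\overline{V}$ be its domain of definition produces an open subset of $\overline{Y}$ containing $Y$ and all generic points of $\overline{Y}\setminus Y$. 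Since $\bar{f}:=g|_{\overline{V}}$ restricts to $f$ on $\overline{V}\cap Y$, the inclusion $\bar{f}(\overline{V}\cap Y)\subset X$ is automatic, so $\bar{f}$ is the desired morphism of good partial compactifications $(\overline{V}\cap Y,\overline{V})\to(X,\overline{X})$.
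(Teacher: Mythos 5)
Your overall strategy --- restrict the divisorial valuations $v_i$ of $k(\overline{Y})$ to $k(X)$, show via Abhyankar's inequality that each nontrivial restriction $w_i$ is again geometric, realize the $w_i$ as boundary divisors on a model of $X$, and then extend $f$ over the $\eta_i$ --- is the right one, and your transcendence-degree computation showing $\trdeg_k$ of the residue field of $w_i$ equals $\dim X-1$ is correct; it is exactly what makes the construction possible. The genuine problem is the step you yourself single out as the heart of the proof: you invoke \autoref{lemma:Gabber} to produce a \emph{smooth} partial compactification $\overline{X}$ realizing the $w_i$ as prime divisors. Gabber's lemma does not do this. As stated it concerns a local ring $\mathcal{O}$ together with a prime $\mathfrak{p}$ such that $\mathcal{O}/\mathfrak{p}$ is \emph{one-dimensional}, and it regularizes the pair (curve, ambient space) by blowing up closed points of the curve; in the paper it is needed only for the closed-immersion case, where $\overline{Y}$ maps onto a one-dimensional subscheme of a compactification of $X$. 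It says nothing about realizing a divisorial valuation of $k(X)$ as a codimension-one point of a regular model (the relevant boundary divisor has dimension $\dim X-1$, not $1$), so the step ``invoke \autoref{lemma:Gabber} to produce $\overline{X}$'' has no content as written.

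The correct tool is \autoref{prop:blow-ups} (equivalently {\cite[Ex.~8.3.16]{Liu/2002}}), which is what the paper uses: starting from a \emph{normal} compactification $X'$ of $X$, finitely many blow-ups of the closure of the center of $w_i$ make that center a normal codimension-one point $\xi_i$ of the resulting model. No resolution-type input is then needed for smoothness: a one-dimensional normal local ring is a discrete valuation ring, hence regular, so $\xi_i$ automatically lies in the open regular locus, and a suitable neighbourhood of $X\cup\{\xi_i\}$ inside that locus, with superfluous boundary components removed, is a good partial compactification. This also explains why the paper first reduces to the case where $\overline{Y}\setminus Y$ is irreducible: treating one $\eta_i$ at a time avoids the additional issue, which your argument does not address, of realizing all the $w_i$ simultaneously on a single model (later blow-ups could a priori interfere with the divisors already produced). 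Your final extension step, via the local homomorphism $\mathcal{O}_{\overline{X},\xi_i}=\mathcal{O}_{w_i}\hookrightarrow\mathcal{O}_{\overline{Y},\eta_i}$ spread out to a neighbourhood of $\eta_i$, is fine and matches the paper.
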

\begin{proof}
	This is essentially {\cite[Ex.~8.3.16]{Liu/2002}}. Without loss of
	generality, we may assume $X$ and $Y$ to be irreducible. If
	$(Y,\overline{Y})$ is a good partial compactification, we may assume
	that $\overline{Y}\setminus Y$ is a smooth divisor, say with generic
	point $\eta$. If $X'$ is a normal compactification of $X$, then after
	removing a closed subset of codimension $\geq 2$ from
	$\overline{Y}$, $f$ extends to a morphism $f':\overline{Y}\rightarrow
	X'$. If $f'(\eta)\in X$ there is nothing to do; we may take
	$\overline{X}=X$. Otherwise, {\cite[Ex.~8.3.16]{Liu/2002}} tells us
	that there is a blow-up $X''\rightarrow X'$ of $X'$ in
	$\overline{\{f'(\eta)\}}$ such that  $f'$ extends to a map $f'':\overline{Y}\rightarrow X''$
	such that $f''(\eta)$ is a normal codimension $1$ point of $X''$. We
	define $\overline{X}$ to be a suitable neighborhood of $f''(\eta)$ to
	finish the proof.
\end{proof}

Let $k$ be an algebraically closed field and $X$ a normal, irreducible,
separated, finite type $k$-scheme. We write $k(X)$ for the function field of
$X$. We recall a few basic definitions:
\begin{Definition}Let $v$ be a discrete valuation on $k(X)$.
	\begin{itemize}
		\item We write $\mathcal{O}_v\subset
			k(X)$ for its valuation ring, $\mathfrak{m}_v$ for the
			maximal ideal of $\mathcal{O}_v$ and $k(v)$ for its residue field.
		\item If $X'$ is a model of $k(X)$, then a point $x\in
			X'$ is called \emph{center of $v$}, if
			$\mathcal{O}_{X',x}\subset \mathcal{O}_v\subset k(X)$,
			and $\mathfrak{m}_v\cap
			\mathcal{O}_{X',x}=\mathfrak{m}_x$. 
		\item $v$ is called \emph{geometric} if there
			exists a model $X'$ of $k(X)$ such that $v$ has a
			center $\xi\in X'$ which is a normal codimension $1$ point.
			In this case $\mathcal{O}_{v}=\mathcal{O}_{X',\xi}$.
	\end{itemize}
\end{Definition}
\begin{Remark}Recall that if
			$X'$ is separated over $k$, then $v$ has at most one
			center on $X'$ and if $X'$ is proper, then $v$ has
			precisely one center on $X'$.
		\end{Remark}

\begin{Proposition}[{\cite[Ch.~8,
	Thm.~3.26]{Liu/2002}, \cite[Ch.8~, Ex.~3.14]{Liu/2002}}]\label{prop:abhyankar-inequality}\label{prop:blow-ups}
	Let $X$ be a normal, irreducible, separated, finite type $k$-scheme
	and
	$v$ a discrete valuation on $k(X)$. 
	\begin{enumerate}[label={\emph{(\alph*)}}]
		\item We have the inequality
	\begin{equation}\label{abhyankar-inequality}\trdeg_{k}k(v)\leq \dim X
		-1\end{equation}
	where $k(v)$ is the residue field of $\mathcal{O}_v$.
\item\label{item:geometricValuationsB} The discrete valuation $v$ is geometric if and only if equality holds
	in \eqref{abhyankar-inequality}.
\item Let  
	$X'$ be a
	normal, proper compactification of $X$ and $x_0$ the center of $v$ on
	$X'$. If $k(v)/k(x_0)$ is finitely generated, we can make
	\ref{item:geometricValuationsB} more precise:
	
	Define $X'_0:=X'$.
	Inductively define \[\phi_n:X'_n\rightarrow X'_{n-1}\]
	as the blow-up of $X'_{n-1}$ in the reduced closed subscheme defined
	by $\overline{\{x_{n-1}\}}$, where $x_{n-1}$ is the center of $v$ on $X'_{n-1}$.
	If \eqref{abhyankar-inequality} is an equality for $v$, then for large $n$,
	$\phi_n$ is an isomorphism, i.e.~for large $n$,
	$x_n$ is a codimension $1$ point of $X'_n$.
	\end{enumerate}
\end{Proposition}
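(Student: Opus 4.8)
The plan is to deduce both the inequality (a) and the nontrivial direction of (b) from a single algebraic-independence lemma, and then to obtain (c) from a monotonicity argument whose only serious input is the convergence of the blow-up tower to $\mathcal{O}_v$. For (a), I would first record the lemma: if $\bar x_1,\dots,\bar x_s\in k(v)$ are algebraically independent over $k$, if $x_1,\dots,x_s\in\mathcal{O}_v$ lift them, and if $t\in\mathfrak{m}_v\setminus\{0\}$, then $x_1,\dots,x_s,t$ are algebraically independent over $k$. Indeed, a hypothetical nonzero relation can be divided by a power of $t$ so that its $t$-constant term $P_0$ is nonzero; reducing modulo $\mathfrak m_v$ annihilates every monomial containing $t$ and leaves $P_0(\bar x_1,\dots,\bar x_s)=0$, contradicting independence of the $\bar x_i$. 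Taking $s=\trdeg_k k(v)$ and any nonzero $t\in\mathfrak m_v$ gives $\trdeg_k k(v)+1\le\trdeg_k k(X)=\dim X$, which is \eqref{abhyankar-inequality}.

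For (b), ``geometric $\Rightarrow$ equality'' is immediate: if $\mathcal{O}_v=\mathcal{O}_{X',\xi}$ with $\xi$ a normal codimension-$1$ point of a model $X'$, then $k(v)=k(\xi)$ and $\trdeg_k k(\xi)=\dim\overline{\{\xi\}}=\dim X-1$. For the converse I would construct a model by hand. Set $n=\dim X$, choose lifts $u_1,\dots,u_{n-1}\in\mathcal{O}_v$ of a transcendence basis of $k(v)/k$ and a uniformizer $\pi$, $v(\pi)=1$. By the lemma the $u_i$ and $\pi$ are algebraically independent, so $R:=k[u_1,\dots,u_{n-1},\pi]\subset\mathcal{O}_v$ is a polynomial ring with $k(X)/\Frac R$ finite, and a direct check (the $\pi$-constant term of any element of $\mathfrak m_v\cap R$ lies in $\mathfrak m_v\cap k[u_1,\dots,u_{n-1}]=0$) shows $\mathfrak m_v\cap R=\pi R$. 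Let $S$ be the integral closure of $R$ in $k(X)$, a normal finite-type $k$-algebra finite over $R$, and put $\mathfrak q:=\mathfrak m_v\cap S$. Then $\mathfrak q$ lies over $\pi R$, so $S/\mathfrak q$ is integral over $k[u_1,\dots,u_{n-1}]$ and $\dim S/\mathfrak q=n-1$; as $\dim S=n$, the dimension formula for finite-type domains yields $\codim_{\Spec S}\mathfrak q=1$. Hence $S_{\mathfrak q}$ is a one-dimensional normal local ring, i.e.\ a DVR with fraction field $k(X)$ dominated by $\mathcal{O}_v$, so $S_{\mathfrak q}=\mathcal{O}_v$; thus $v$ has the normal codimension-$1$ center $\mathfrak q$ on $\Spec S$ and is geometric.

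For (c), put $t_m:=\trdeg_k k(x_m)$, so $\codim_{X'_m}x_m=n-t_m$. The local homomorphisms $\mathcal{O}_{X'_{m-1},x_{m-1}}\hookrightarrow\mathcal{O}_{X'_m,x_m}$ induce injections on residue fields, so $t_m$ is nondecreasing and, by (a), bounded by $n-1$; hence it stabilizes at some $t$, and the hypothesis that $k(v)/k(x_0)$ is finitely generated makes the eventual residue extensions finite, so the later centers are closed points of the exceptional divisors. It suffices to prove $t=n-1$: once $x_m$ is a codimension-$1$ point, finitely many further blow-ups of its reduced closure realize the normalization of the one-dimensional domain $\mathcal{O}_{X'_m,x_m}$, which is dominated by the DVR $\mathcal{O}_v$ and therefore equals it, so $\phi_n$ is an isomorphism thereafter. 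Suppose instead $t\le n-2$. Then $\trdeg_{k(x_m)}k(v)\ge 1$, so there is a unit $\theta\in\mathcal{O}_v$, $v(\theta)=0$, whose residue is transcendental over $k(x_m)$; were $\theta$ to lie in some $\mathcal{O}_{X'_m,x_m}$, its residue would belong to $k(x_m)$, a contradiction. The main obstacle is thus the one classical fact I would still need, namely that the tower of centers exhausts the valuation ring, $\bigcup_m\mathcal{O}_{X'_m,x_m}=\mathcal{O}_v$ (Zariski's theorem, the cited {\cite[Ch.~8, Ex.~3.14]{Liu/2002}}); granting it, every such $\theta$ is eventually captured, excluding $t\le n-2$ and forcing $t=n-1$.
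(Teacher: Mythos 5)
The paper offers no proof of this Proposition at all: it is imported wholesale from Liu's book (Ch.~8, Thm.~3.26 and Ex.~3.14), so your attempt to actually prove it is by construction a different route. Your treatment of (a) and (b) is correct and essentially self-contained: the algebraic-independence lemma (lift a transcendence basis of $k(v)/k$ and adjoin a nonzero $t\in\mathfrak m_v$; a relation with nonzero $t$-constant term dies modulo $\mathfrak m_v$) gives \eqref{abhyankar-inequality} at once, and the construction $R=k[u_1,\dots,u_{n-1},\pi]\subset\mathcal{O}_v$ with $\mathfrak m_v\cap R=\pi R$, followed by normalization and the dimension formula for finite-type domains, produces an explicit normal model on which $v$ has a codimension-$1$ center; the final identification $S_{\mathfrak q}=\mathcal{O}_v$ is correct because a valuation ring admits no proper local overring of the same fraction field dominating it. This buys a genuinely elementary proof of the two parts that the paper actually uses (via \autoref{cor:embedded-blow-ups}), at the cost of length; the paper buys brevity by outsourcing everything.

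For (c) your argument is sound in outline but rests on exactly the classical input you name, Zariski's exhaustion $\bigcup_m\mathcal{O}_{X'_m,x_m}=\mathcal{O}_v$, which is the content of the cited exercise — so this part is no more (and no less) proved than in the paper. Two points to tighten if you want (c) to stand on its own. First, the element $\theta$ must be chosen once and for all: after the transcendence degrees $t_m$ stabilize at $t$ the extensions $k(x_m)/k(x_{m_0})$ are algebraic, so an element of $k(v)$ transcendental over $k(x_{m_0})$ stays transcendental over every later $k(x_m)$; only then does a single lift $\theta$ escape the entire union and contradict exhaustion. Second, your parenthetical use of the hypothesis that $k(v)/k(x_0)$ is finitely generated (``makes the eventual residue extensions finite'') does not visibly enter your chain of deductions; in the source it is what guarantees the process terminates, i.e.\ it belongs inside the proof of the exhaustion/stabilization statement you are quoting, so as written your sketch neither uses nor explains the hypothesis. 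Neither point is fatal, but both should be addressed before calling (c) proved rather than cited.
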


Now we prove \autoref{prop:pullbackGPC} in the case where $f$ is a closed
immersion.

\begin{Lemma}\label{cor:embedded-blow-ups}
	Let $i:Y\hookrightarrow X$ be a closed immersion of normal,
	irreducible, separated, finite type $k$-schemes, and let $v$ be a
	discrete geometric valuation of $k(Y)$ with center $y$ on $Y$ such
	that $k(v)/k(y)$ is finitely generated. Write
	$X_0:=X$, $Y_0:=Y$, $y_0:=y$, and inductively define \[X_n\rightarrow
		X_{n-1}\] as the blow-up of $X_{n-1}$ in
		$\overline{\{y_{n-1}\}}$, $Y_{n}$ as the proper transform of
		$Y_{n-1}$, and $y_n$ as the center of $v$ on $Y_n$. Then for
		large $n$, the center $y_n$ of $v$ has codimension $1$ in $Y_n$.

\end{Lemma}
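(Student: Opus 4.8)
The plan is to observe that the ambient scheme $X$ plays no role: the construction recovers nothing but the intrinsic iterated blow-up of $Y$ in the successive centers of $v$, and the statement then reduces to \autoref{prop:blow-ups}. Concretely, since $\overline{\{y_{n-1}\}}$ is a reduced closed subscheme of $Y_{n-1}$, which is in turn closed in $X_{n-1}$, the strict transform of $Y_{n-1}$ under the blow-up $X_n\to X_{n-1}$ of $X_{n-1}$ in $\overline{\{y_{n-1}\}}$ is canonically the blow-up $\mathrm{Bl}_{\overline{\{y_{n-1}\}}}(Y_{n-1})$ of $Y_{n-1}$ in the very same center; this is the standard compatibility of strict transforms with blowing up a center lying on the subscheme (see e.g.~\cite[Ch.~8]{Liu/2002}). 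Hence $Y_n\cong \mathrm{Bl}_{\overline{\{y_{n-1}\}}}(Y_{n-1})$ and we may forget $X$ altogether.

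It remains to treat the intrinsic blow-up sequence of $Y$. Here the only discrepancy with \autoref{prop:blow-ups} is that the latter is stated for a \emph{proper} compactification, whereas $Y$ itself need not be proper. To remedy this I would fix a normal proper compactification $\overline{Y}$ of $Y$: starting from any compactification and normalizing, the normalization is finite, hence proper and of finite type, and is an isomorphism over the normal locus, so that $Y$ is a dense open subscheme of the irreducible normal proper $k$-scheme $\overline{Y}$ with $k(\overline{Y})=k(Y)$. Since $v$ is geometric, \autoref{prop:blow-ups} yields equality in \eqref{abhyankar-inequality}; moreover the center $\bar{y}_0$ of $v$ on $\overline{Y}$ equals $y_0$, because $Y$ is open in $\overline{Y}$ and centers on separated models are unique, so $k(v)/k(\bar{y}_0)=k(v)/k(y)$ is finitely generated and all hypotheses of \autoref{prop:blow-ups} hold. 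Let $\overline{Y}_n\to\overline{Y}_{n-1}$ be the blow-up in $\overline{\{\bar{y}_{n-1}\}}$, with $\bar{y}_{n-1}$ the center of $v$ on $\overline{Y}_{n-1}$, and let $U_n\subset\overline{Y}_n$ denote the preimage of $Y$.

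The point demanding care --- and the main obstacle --- is to show that the centers never migrate into the boundary $\overline{Y}_n\setminus U_n$; granting this, compatibility of blowing up with the open immersion $Y\hookrightarrow\overline{Y}$ identifies $U_n$ with the intrinsic blow-up $Y_n$ for every $n$. I would argue by induction, using that the image of a center is a center: the local ring of $\overline{Y}_{n-1}$ at the image of $\bar{y}_n$ is dominated by $\mathcal{O}_{\overline{Y}_n,\bar{y}_n}\subset\mathcal{O}_v$, so by uniqueness of centers on the separated scheme $\overline{Y}_{n-1}$ the morphism $\overline{Y}_n\to\overline{Y}_{n-1}$ carries $\bar{y}_n$ to $\bar{y}_{n-1}$. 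Thus $\bar{y}_n$ lies over $\bar{y}_0=y_0\in Y$, whence $\bar{y}_n\in U_n\cong Y_n$; there it is the unique center $y_n$ and has the same local ring, so that $\codim_{Y_n}(y_n)=\codim_{\overline{Y}_n}(\bar{y}_n)$. Finally, \autoref{prop:blow-ups} applied to $\overline{Y}$ shows that for large $n$ the center $\bar{y}_n$ is a codimension $1$ point of $\overline{Y}_n$, and therefore $y_n$ is a codimension $1$ point of $Y_n$, as claimed.
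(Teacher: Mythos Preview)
Your proof is correct and follows the same strategy as the paper: both reduce to the intrinsic blow-up sequence on $Y$ via the standard fact that the proper transform $Y_n\to Y_{n-1}$ is canonically the blow-up of $Y_{n-1}$ in $\overline{\{y_{n-1}\}}$, and then invoke \autoref{prop:blow-ups}. You are more explicit than the paper in handling the properness hypothesis of \autoref{prop:blow-ups}(c)---passing to a normal proper compactification $\overline{Y}$ and checking that successive centers remain over $Y$---whereas the paper treats this reduction as immediate.
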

\begin{proof}
	This follows directly from \autoref{prop:blow-ups}, using that the
	induced map $Y_n\rightarrow Y_{n-1}$ between the proper transforms is naturally isomorphic to
	the blow-up of $Y_{n-1}$ in $\overline{\{y_{n-1}\}}$.
\end{proof}
\begin{Proposition}\label{lemma:pull-back-closed-immersion}\autoref{prop:pullbackGPC} is true for closed
	immersions.
\end{Proposition}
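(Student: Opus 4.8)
The plan is to realize an open neighbourhood of the generic points of the boundary of the given compactification $\overline Y$ as a closed subvariety of a good partial compactification of $X$, obtained by blowing up a normal proper compactification of $X$ along the centres of the boundary valuations of $\overline Y$. As in the proof of \autoref{prop:closedOrDominant} I would first reduce to the case that $Y$, and hence $\overline Y$, is irreducible. Then $D:=\overline Y\setminus Y$ has finitely many generic points $\eta_1,\dots,\eta_d$, each a codimension $1$ point of the smooth, hence normal, scheme $\overline Y$. Each $\eta_j$ gives the geometric valuation $v_j:=\ord_{\eta_j}$ of $k(Y)=k(\overline Y)$; note that $\trdeg_k k(v_j)=\dim\overline Y-1$, so equality holds in \eqref{abhyankar-inequality} and $v_j$ is indeed geometric, and that $k(v_j)$ and all relevant residue fields are finitely generated over $k$.

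Next I would choose a normal proper compactification $X'$ of $X$, arranged with the help of \autoref{lemma:Gabber} so that the closure $\bar Y'$ of $Y$ in $X'$ is normal; this normality is the crucial point and is discussed as the main obstacle below. Since $Y$ is closed in $X$ one has $\bar Y'\cap X=Y$, whence $\bar Y'\setminus Y\subset X'\setminus X$. Because $\overline Y$ is normal and $X'$ is proper, the rational map $\overline Y\dashrightarrow X'$ extending $Y\hookrightarrow X\hookrightarrow X'$ is defined on an open $\overline V_0\subset\overline Y$ whose complement has codimension $\ge 2$; in particular every $\eta_j\in\overline V_0$. Writing $h:\overline V_0\to X'$ for this morphism and $\xi_j:=h(\eta_j)$, each $\xi_j$ is the centre of $v_j$ on $\bar Y'$ (equivalently on $X'$), and it lies in the boundary $X'\setminus X$.

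The heart of the argument is now to turn each $\xi_j$ into a codimension $1$ point by applying the embedded blow-up result \autoref{cor:embedded-blow-ups} to the closed immersion $\bar Y'\hookrightarrow X'$ and the valuation $v_j$, whose hypotheses hold by the previous paragraph. Blowing up $X'$ in $\overline{\{\xi_j\}}$ (an isomorphism over $X$, since the centres lie in the boundary) and passing to proper transforms, the lemma produces after finitely many steps a model on which the centre of $v_j$ has codimension $1$; performing this successively for $j=1,\dots,d$ — the later blow-ups being isomorphisms near the codimension $1$ centres already produced — yields a single normal proper model $X''\to X'$, isomorphic to $X'$ over $X$, on which every centre $\xi_j''$ of $v_j$ is a codimension $1$ point of $X''\setminus X$, and $h$ lifts to $h'':\overline V_1\to X''$ with $h''(\eta_j)=\xi_j''$. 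Since a normal scheme is regular in codimension $1$, each $\xi_j''$ is a regular point of $X''$ at which the reduced boundary is a single smooth divisor; thus the union $W$ of the singular locus of $X''$, the non-normal-crossings locus of $X''\setminus X$, and the codimension $\ge 2$ components of $X''\setminus X$ is a closed set of codimension $\ge 2$ avoiding all $\xi_j''$. Setting $\overline X:=X''\setminus W$ makes $(X,\overline X)$ a good partial compactification containing every $\xi_j''$, and with $\overline V:=(h'')^{-1}(\overline X)$ — an open containing every $\eta_j$, hence every generic point of $\overline Y\setminus Y$ — the morphism $\bar f:=h''|_{\overline V}$ satisfies $\bar f(\overline V\cap Y)\subset X$ and is the sought morphism of good partial compactifications.

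I expect the main obstacle to be exactly the hypothesis of \autoref{cor:embedded-blow-ups}: it requires the $Y$-model to be \emph{normal} and closed in a normal ambient, whereas the closure $\bar Y'$ of $Y$ in an arbitrary normal proper $X'$ need not be normal, and one cannot normalize it without destroying the closed immersion into $X'$. Producing a compactification of $X$ in which the closure of $Y$ is normal (so that the embedded blow-up machinery applies along the generic points of the boundary) is precisely the input supplied by O.~Gabber's desingularization lemma \autoref{lemma:Gabber}. In the absence of resolution of singularities this is the only non-formal ingredient of the proof, and it is what takes over the role played by Hironaka-style embedded resolution in the characteristic $0$ theory; once it is available, the remaining steps are the formal valuation-theoretic and good-locus arguments sketched above.
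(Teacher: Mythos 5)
Your skeleton---normal proper compactification $X'$, extension of $i$ away from a codimension $\geq 2$ subset, embedded blow-ups via \autoref{cor:embedded-blow-ups}, and an appeal to Gabber's \autoref{lemma:Gabber}---matches the paper's, but two of your steps fail, and they sit exactly where the real work is. The central one: ``since a normal scheme is regular in codimension $1$, each $\xi_j''$ is a regular point of $X''$'' is false. \autoref{cor:embedded-blow-ups} makes $\xi_j''$ a codimension $1$ point of the proper transform of the closure of $Y$, not of $X''$ (nor of $X''\setminus X$); its codimension in $X''$ is $\codim_X Y+1\geq 2$ whenever $Y\subsetneq X$, and normality of $X''$ gives no regularity there. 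This is precisely the gap that part \ref{gabber:b} of \autoref{lemma:Gabber} fills in the paper: one blows up $X'$ repeatedly along the boundary \emph{curve} $C=\overline{\{i'(\eta)\}}$ (a one-dimensional subscheme, not the closure of $Y$), applying the lemma to $\mathcal{O}=\mathcal{O}_{X',i'(\eta)}$ and $\mathfrak{p}=\ker(\mathcal{O}_{X',i'(\eta)}\to\mathcal{O}_{\overline{Y},\eta})$, using that $\mathcal{O}_{\mathfrak{p}}$ is regular because the generic point of $Y$ lies in the smooth locus $X$. You instead invoke \autoref{lemma:Gabber} to make the closure of $Y$ in $X'$ normal, which is not what it does; that worry is in any case harmless, since one only works near $\eta_j$, where the blow-up process of \autoref{cor:embedded-blow-ups} terminates with a centre whose local ring is the valuation ring $\mathcal{O}_{v_j}=\mathcal{O}_{\overline{Y},\eta_j}$ itself, so the proper transform agrees with $\overline{Y}$ there.

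The second gap is your assertion that at $\xi_j''$ ``the reduced boundary is a single smooth divisor,'' so that the non-normal-crossings locus of $X''\setminus X$ avoids all the $\xi_j''$. Even after $X''$ has been made regular at $\xi_j''$, this is a codimension $\geq 2$ point, and the boundary can perfectly well be a singular hypersurface (or have several branches) through it; nothing you have done rules this out, and then your closed set $W$ contains $\xi_j''$ and the construction of $\overline{X}$ collapses. The paper devotes the entire second half of its proof to this: writing a local equation $g$ of the boundary as $uh_0^m+\sum_{i=1}^n a_ih_i$ in a regular system of parameters $h_0,\ldots,h_n$ adapted to $i'(\overline{Y})$, and performing $m$ further blow-ups in $\overline{\{i'(\eta)\}}$ until $g=h_0^m\cdot(\text{unit})$, which forces the boundary to be the smooth divisor $V(h_0)$ near $i'(\eta)$. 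Some argument of this kind is indispensable and is missing from your proposal.
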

\begin{proof}
	Let $i:Y\hookrightarrow X$ be a closed immersion of smooth, connected,
	separated, finite type $k$-schemes.  Let $(Y,\overline{Y})$ be a good
	partial compactification. Without loss of generatlity we may assume
	that $\overline{Y}\setminus Y$ is irreducible, and hence (the support
	of) a smooth
	divisor. To prove the lemma, we may replace $\overline{Y}$ by open
	neighborhoods $\overline{V}$ of the generic point $\eta$ of
	$\overline{Y}\setminus Y$ and $Y$ by $\overline{V}\cap Y$.

	Let $X'$ be a normal, proper $k$-scheme containing $X$ as a dense open
	subscheme.  After possibly removing a closed
	subset of codimension $\geq 2$ from $\overline{Y}$ we may assume that
	$i$ extends to a morphism $i':\overline{Y}\rightarrow X'$. Note that
	$i'(\eta)\in X'\setminus X$: otherwise we would have $i'(\eta)\in
	i'(Y)$, as $i'(\overline{Y})$ is irreducible, and then the valuation
	on $k(Y)$ associated with $\eta$  would have two centers,
	which is impossible as $Y$ is separated over $k$.

	By \autoref{cor:embedded-blow-ups} there exists a modification $X''\rightarrow
	X'$, which is an isomorphism over $X$, such that (perhaps after again removing a closed subset of codimension
	$\geq 2$ from $\overline{Y}$) $i$ extends to a map
	$i'':\overline{Y}\rightarrow X''$ such that $i''(\eta)$ is
	a codimension $1$ point of the closure of  $i''(\overline{Y})$ in
	$X''$. Thus, replacing $X'$ by $X''$ we may assume that
	$i:Y\hookrightarrow X$ extends to a closed immersion
	$i':\overline{Y}\hookrightarrow X'$.
	

	It remains to show that we can replace $X'$
	by a chain of blow-ups with centers over $X'\setminus
	X$, and $\overline{Y}$ with its proper transform, such that $i'(\eta)$ is a regular
	point of $X'$ and a regular point of $X'\setminus X$.

	For this we use a desingularization result kindly
	communicated to us by Ofer Gabber.
	\begin{Lemma}[Gabber]\label{lemma:Gabber}
		\begin{enumerate}[label={\emph{(\alph*)}}, ref={(\alph*)}]	
			\item \label{gabber:a} Let $\mathcal{O}$ be a noetherian local integral domain and
		$\mathfrak{p}\subset\mathcal{O}$ a prime ideal such that $\mathcal{O}/\mathfrak{p}$ is of
		dimension $1$ and such that the normalization of
		$\mathcal{O}/\mathfrak{p}$ is finite over $\mathcal{O}/\mathfrak{p}$.
		Write $X:=X_0:=\Spec \mathcal{O}$,  $C:=C_0:=\Spec
		\mathcal{O}/\mathfrak{p}$. For $n\geq 0$ let $X_{n+1}$ be the blow-up of $X_n$ at the closed
		points of $C_n$, and $C_{n+1}$ the proper transform of $C_n$ in
		$X_{n+1}$.  For large $n$, $C_n$ is regular, and at
		every closed point $\xi$ of $C_n$ we have that if
		\[\mathfrak{p}_n:=\ker\left(\mathcal{O}_{X_n,\xi}\rightarrow
			\mathcal{O}_{C_n,\xi}\right),\]
			then for every $m\in \N$ the $\mathcal{O}_{C_n,\xi}$-module
			$\mathfrak{p}_n^m/\mathfrak{p}_n^{m+1}$ is torsion-free. 

		\item \label{gabber:b}If $\mathcal{O}_{\mathfrak{p}}$ is
			regular, then for large $n$
			so is $\mathcal{O}_{X_n,\xi}$.
	\end{enumerate}
	\end{Lemma}
	\begin{proof}
			The normalization of $\mathcal{O}/\mathfrak{p}$ is
			finite, and it is obtained by blowing up singular
			points repeatedly. Thus we may assume that
			$\mathcal{O}/\mathfrak{p}$ is a discrete valuation
			ring.

			After this reduction, the map
			$C_{n+1}\rightarrow C_n$ induced by the blow-up
			$X_{n+1}\rightarrow X_n$ is an isomorphism. In
			particular, $C_n=C_{n-1}=\ldots=C_0=\Spec
			\mathcal{O}/\mathfrak{p}$.

			Write $\mathcal{O}'$ for the local ring of
			$X_{1}$ in the closed point of $C_{1}$, and $\pi\in
			\mathcal{O}$ for a lift of a uniformizer of the
			discrete valuation ring $\mathcal{O}/\mathfrak{p}$. To
			ease notation we will also write $\pi$ for its image
			in $\mathcal{O}/\mathfrak{p}$.
			Then $\mathcal{O}'$ is the localization of the ring
			$\sum_{i\geq 0}\pi^{-i}\mathfrak{p}\subset
			\Frac(\mathcal{O})$ at a suitable maximal ideal.
			Moreover, $\mathfrak{p}_1$ is the localization of the
			ideal generated by $\pi^{-1}\mathfrak{p}$. From this
			it is not difficult to see that we get a surjective
			$\mathcal{O}/\mathfrak{p}$-linear morphism
			\[\phi:\pi^{-m}\mathcal{O}/\mathfrak{p}\otimes_{\mathcal{O}/\mathfrak{p}}
				\mathfrak{p}^m/\mathfrak{p}^{m+1}\twoheadrightarrow
				\mathfrak{p}_1^m/\mathfrak{p}_1^{m+1},\]
			defined by $\pi^{-m}\otimes x \mapsto
			\pi^{-m}x$.
			
			The $\mathcal{O}/\mathfrak{p}$-module $\ker(\phi)$ is
			torsion. Indeed, if $x\in \mathfrak{p}^m$ is an
			element such that $\pi^{-m}x\in
			\mathfrak{p}_1^{m+1}$, then $\pi^{-m}x\in
			\pi^{-(m+1)}\mathfrak{p}^{m+1}$, so $\pi x\in
			\mathfrak{p}^{m+1}$. This implies that $\phi$ induces
			a surjective map on torsion submodules. 
			
			Now let
			$e_m\geq 0$ be the smallest integer such that
			multiplication with
			$\pi^{e_m}$ kills the torsion submodule of
			$\mathfrak{p}^m/\mathfrak{p}^{m+1}$ or equivalently of
			$\pi^{-m}\mathcal{O}/\mathfrak{p}\otimes_{\mathcal{O}/\mathfrak{p}}
			\mathfrak{p}^m/\mathfrak{p}^{m+1}$. Similarly, let
			$e'_m\geq 0$ be the integer such that multiplication
			with $\pi^{e'_m}$
			kills the torsion submodule of
			$\mathfrak{p}_1^m/\mathfrak{p}_1^{m+1}$. Since $\phi$
			induces a surjective map on torsion submodules, it
			follows that $e_m\geq e'_m$. If $e_m>0$, we claim that
			$e_m>e'_m$. For this it is sufficient to show that
			for every element $x\in \mathfrak{p}^{m}$ such that
			$\pi x \in \mathfrak{p}^{m+1}$, we have $\pi^{-m}\otimes
			x\in \ker(\phi)$. But this is clear: $\phi(\pi^{-m}\otimes
			x)=\pi^{-(m+1)}\pi x\in \mathfrak{p}_1^{m+1}$.

			Repeating the argument for the blow-ups $X_2\rightarrow
			X_1$, $X_3\rightarrow X_2$, and so on, it follows that for fixed $m$, there
			exists a minimal integer $N(m)\geq 0$ such that
			$\mathfrak{p}^m_{n}/\mathfrak{p}^{m+1}_{n}$ is
			torsion free for all $n\geq N(m)$. It remains to see
			that the sequence $N(m)$ is bounded. Consider the
			associated graded ring $\bigoplus_{m\geq 0}
			\mathfrak{p}^m/\mathfrak{p}^{m+1}$. The subset of
			elements which are killed by a power of $\pi$ is an
			ideal of this noetherian ring, hence finitely
			generated. Thus the sequence of numbers $e_m$ is bounded, which
			implies that the sequence $N(m)$ is bounded. This
			completes the proof \ref{gabber:a}.

			Finally, lets prove \ref{gabber:b}. Assume that
			$\mathcal{O}/\mathfrak{p}$ is a discrete valuation
			ring, that $\mathcal{O}_{\mathfrak{p}}$ is regular and
			that $\mathfrak{p}^{m}/\mathfrak{p}^{m+1}$ is a free
			$\mathcal{O}/\mathfrak{p}$-module for every
			${m}\geq 0$. To prove that $\mathcal{O}$ is
			regular, it suffices to show that $\Spec
			\mathcal{O}/\mathfrak{p}\rightarrow \Spec \mathcal{O}$
			is a regular immersion, i.e.~that for every $m\geq 0$
			the natural (surjective) map
			\begin{equation}\label{eq:sym}\Sym^m
				\mathfrak{p}/\mathfrak{p}^2\twoheadrightarrow
				\mathfrak{p}^m/\mathfrak{p}^{m+1}\end{equation}
			is an isomorphism of
			$\mathcal{O}/\mathfrak{p}$-modules. Write $K$ for the
			fraction field of $\mathcal{O}/\mathfrak{p}$. Looking
			at the commutative diagram
			\begin{equation*}
				\begin{tikzcd}
					\mathcal{O}\rar[two
					heads]\dar[hookrightarrow]&
					\mathcal{O}/\mathfrak{p}\dar[hookrightarrow]\\
					\mathcal{O}_{\mathfrak{p}}\rar[two
					heads]&
					K=\mathcal{O}_{\mathfrak{p}}/\mathfrak{p}\mathcal{O}_{\mathfrak{p}}
				\end{tikzcd}
        		 \end{equation*}
			 we see that \eqref{eq:sym} is an isomorphism after
			 tensoring with $K$. In particular, $\Sym^m
			 \mathfrak{p}/\mathfrak{p}^2$ and
			 $\mathfrak{p}^m/\mathfrak{p}^{m+1}$ have the same
			 rank $r$. As
			 $\mathfrak{p}^m/\mathfrak{p}^{m+1}$ is a free
			 $\mathcal{O}/\mathfrak{p}$-module by
			 assumption, it follows that \eqref{eq:sym} can be
			 identified with is a
			 surjective endomorphism  of a free
			 $\mathcal{O}/\mathfrak{p}$-module
			 of rank $r$
			 and hence is an
			 isomorphism.
       %
       %
       %
		\end{proof}
		Using \autoref{lemma:Gabber} we finish the proof of
		\autoref{lemma:pull-back-closed-immersion}.
		Let $\phi:\mathcal{O}_{X',i'(\eta)}\rightarrow
		\mathcal{O}_{\overline{Y}, \eta}$ be the morphism induced
		by $i'$. Then $\phi$ is surjective, as $i'$ is a
		closed immersion by construction, and if $\mathfrak{p}=\ker(\phi)$, then
		$\mathfrak{p}$ is prime. As
		$\mathcal{O}_{\overline{Y},\eta}=\mathcal{O}_{X',i'(\eta)}/\mathfrak{p}$
		is $1$-dimensional, we can apply Gabber's
		\autoref{lemma:Gabber} to
		$\mathcal{O}_{X',i'(\eta)}$ and $\mathfrak{p}$: It shows that after
		replacing $X'$ by a chain of blow-ups with centers  over
		$X'\setminus X$, and $\overline{Y}$ with its proper transform,
		$i'(\eta)$ lies in $X''_{\reg}$. Thus, after
		removing a closed subset of codimension $\geq 2$ from
		$\overline{Y}$ we have $i'(\overline{Y})\subset X'_{\reg}$.

		Moreover, as $i'(\eta)$ is a regular point of
		$i'(\overline{Y})$, there is a regular system of parameters
		$h_0,\ldots, h_{n}$ of
		$\mathcal{O}_{X',i'(\eta)}$ such that
		$(h_1,\ldots, h_n)=\mathfrak{p}=\ker(\mathcal{O}_{X',i'(\eta)}\rightarrow
		\mathcal{O}_{\overline{Y},\eta})$, and such that $h_0$ is the
		uniformizer of the discrete valuation ring
		$\mathcal{O}_{\overline{Y},\eta}=\mathcal{O}_{X',i'(\eta)}/\mathfrak{p}$.

		Without loss of generality we may assume that $X'\setminus X$
		is the support of a Cartier divisor with local equation $g$
		around $i'(\eta)$. Then $g\in \mathfrak{m}_{i'(\eta)}$, where
		$\mathfrak{m}_{i'(\eta)}$ is the maximal ideal of
		$\mathcal{O}_{X',i'(\eta)}$, and we claim that $g$ can be
		written
		\[g=uh_0^m+\sum_{i=1}^n a_i h_i\] with $u\in
		\mathcal{O}_{X',i'(\eta)}^\times$, $a_i \in
		\mathcal{O}_{X',i'(\eta)}$. Indeed, since
		$i'(\overline{Y})\cap X\neq \emptyset$, we see that
		$g$ has nonzero image in
		$\mathcal{O}_{X',i'(\eta)}/(h_1,\ldots,
		h_n)=\mathcal{O}_{\overline{Y},\eta}$ which is a
		discrete valuation ring with uniformizer $h_0$. So
		$g=\bar{u}h_0^m \mod (h_1,\ldots, h_n)$ with
		$\bar{u}\in \mathcal{O}_{\overline{Y},\eta}^\times$.
		Any lift $u$ of $\bar{u}$ to $\mathcal{O}_{X',i'(\eta)}$
		is a unit, so the claim follows. Moreover, $m>0$,
		since $i'(\overline{Y})\not\subset X$.

		If for every $i>0$ the term $a_ih_i$ is divisible by
		$h_0^m$ we are done, because in this case we can write
		$g=h_0^m\cdot\text{unit}$, so around $i'(\eta)$ the
		reduced induced structure on $X'\setminus X$ is
		$V(h_0)$, hence regular, and $i'(\overline{Y})$
		intersects $X'\setminus X$ in $i'(\eta)$
		transversally. 

		If $a_ih_i$ is not divisible by $h_0^m$ for all $i>0$,
		then we blow up $X'$ in $\overline{\{i'(\eta)\}}$ and replace $X'$ by
		this blow-up and $\overline{Y}$ by its proper
		transform (note that this does not change
		$\overline{Y}$, as $i'(\eta)$ is of codimension $1$
		in $i'(\overline{Y})$). Then the local ring $\mathcal{O}_{X',i'(\eta)}$
		has a regular system of parameters $(h_0,h_1/h_0,\ldots,
		h_n/h_0)$. Hence, repeating this process $m$-times, we
		can write
		\[g=h_0^m(u+\sum_{i=1}^na_ih_i/h_0^m)\]
		in $\mathcal{O}_{X',i'(\eta)}$, and we conclude as in
		the previous paragraph.
	\end{proof}
\section{Checking for regular singularities on curves}\label{sec:curves}
We continue to denote by $k$ an algebraically closed field of positive
characteristic $p$, and by $X$ a smooth, connected, separated, finite type
$k$-scheme.


To prove \autoref{thm:main:curves}, we first establish the following easy
lemma:
\begin{Lemma}\label{lemma:poleorderconstructible}
	Let $S$ be a noetherian scheme, $X\rightarrow S$ a smooth morphism of
	finite type $k$-schemes with $X=\Spec
	A$ affine, $U=\Spec A[t^{-1}]$, and $t\in A$ a regular element. Assume that the closed subscheme $D:=V(t)\subset X$ is
	irreducible and smooth over $S$. If
	$g\in A[t^{-1}]$, then the set
	\[ \Pol_{\leq n}(g):=\left\{ s\in S|\; g|_{U_s}\in
	\Gamma(U_s,\mathcal{O}_{U_s})\text{ has pole order } \leq n\text{ along
	} D_s \right\}\]
	is a constructible subset of $S$.
\end{Lemma}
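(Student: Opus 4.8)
The plan is to reduce the a priori somewhat analytic condition ``pole order $\leq n$'' to a fibrewise ideal-membership condition, to make that condition uniform in $s$ by a base-change computation, and finally to recognize the resulting locus as the image of a finite-type morphism, to which Chevalley's theorem applies. Since constructibility is local on the target, I may assume throughout that $S=\Spec R$ is affine and Noetherian, and regard $A$ as an $R$-algebra; a subscript $s$ will denote the image in the fibre $A_s:=A\otimes_R\kappa(s)$.

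First I would write $g=h/t^m$ with $h\in A$ and $m\geq 0$, so that $g|_{U_s}=h_s/t_s^m$. Because $D\to S$ is smooth, the fibre $D_s$ is reduced and $t_s$ generates its ideal, so $t_s$ is a uniformizer at every generic point $\eta$ of $D_s$; hence $g|_{U_s}$ has pole order $\leq n$ along $D_s$ exactly when $\ord_\eta(h_s)\geq m-n$ at every such $\eta$. Writing $N:=m-n$, this shows $\Pol_{\leq n}(g)=S$ when $N\leq 0$, and otherwise $\Pol_{\leq n}(g)=T_N:=\{s\in S\mid \ord_\eta(h_s)\geq N\text{ for every generic point }\eta\text{ of }D_s\}$. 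Thus it suffices to prove that each $T_N$ is constructible.

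The key step, and the one I expect to be the main obstacle, is the identity $T_N=\{s\mid h_s\in(t_s^N)\subseteq A_s\}=\{s\mid \bar h_s=0\text{ in }A_s/(t_s^N)\}$: that fibrewise vanishing to order $\geq N$ at the generic points of $D_s$ is equivalent to honest membership in the ideal $(t_s^N)$, with no discrepancy between the generic order condition and global ideal membership. This is where smoothness is essential. Since $X\to S$ is smooth, $X_s$ is regular, hence locally a UFD; since $D\to S$ is smooth, the divisor $D_s$ is regular, so its irreducible components are disjoint and $(t_s)$ is locally a principal prime. Consequently $(t_s^N)$ is locally $(t_s)$-primary and saturated, so $h_s\in(t_s^N)$ may be tested at the generic points of $D_s$, giving the claimed identity.

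Finally I would make this condition uniform. Set $B:=A/(t)$ and $M:=A/(t^N)$. Because $t$ is a regular element, multiplication by $t^i$ induces isomorphisms $(t^i)/(t^{i+1})\cong B$, so $M$ has a filtration with graded pieces free $B$-modules of rank $1$; being successive extensions of projectives, these split and $M\cong B^N$ as a $B$-module. By right-exactness of $\otimes$ one has $A_s/(t_s^N)=M\otimes_R\kappa(s)$, so writing $\bar h=(b_1,\dots,b_N)\in B^N$ yields $T_N=\bigcap_{i=1}^N V_i$ with $V_i:=\{s\mid b_{i,s}=0\text{ in }B_s\}$. As $B_s$ is reduced (being smooth over $\kappa(s)$), $b_{i,s}=0$ holds if and only if the fibre $\pi^{-1}(s)$ of $\pi:\Spec B\to S$ is contained in $V(b_i)$, i.e. $V_i=S\setminus\pi(D(b_i))$. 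The restriction of $\pi$ to the open subscheme $D(b_i)=\Spec B[b_i^{-1}]$ is of finite type over the Noetherian scheme $S$, so $\pi(D(b_i))$ is constructible by Chevalley's theorem; hence each $V_i$, and therefore $T_N=\bigcap_i V_i$, is constructible, which completes the proof.
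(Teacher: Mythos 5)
Your overall strategy (reduce ``pole order $\leq n$'' to fibrewise ideal membership $h_s\in(t_s^N)$, then linearize that condition) is genuinely different from the paper's, which simply regards $g$ as a rational map to $\P^1_S$, notes that the image of $X$ is constructible, and applies Chevalley once to the part of the image lying over the section at infinity. Your first reduction and your key identity are fine: since $X_s$ is regular and $D_s$ is a reduced regular divisor, $(t_s^N)$ is locally the $N$-th power of a principal prime in a UFD, hence primary with no embedded components, so membership is indeed detected at the generic points of $D_s$. The final Chevalley step on $\Spec B$ is also correct.

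The gap is the splitting $M=A/(t^N)\cong B^N$. For $N\geq 2$ the module $M$ is not a $B$-module at all ($t$ does not act by zero on it), so ``$M\cong B^N$ as a $B$-module'' does not typecheck; more importantly, the extensions $0\to (t^i)/(t^{i+1})\to A/(t^{i+1})\to A/(t^i)\to 0$ live in the category of $A$-modules (or, for what you need, $R$-modules), and projectivity of the graded pieces \emph{over $B$} is irrelevant to splitting them there: $B$ is not projective over $A/(t^N)$ (already $k[t]/(t^2)\not\cong k\oplus k$ as $k[t]/(t^2)$-modules), and the $R$-linear splitting you actually need --- so that $\bar h\mapsto(b_1,\dots,b_N)$ is compatible with $\otimes_R\kappa(s)$ --- would follow from $B$ being projective as an $R$-module, which smoothness does not provide (e.g.\ $k[x,x^{-1}]$ is flat but not projective over $k[x]$). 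Such a splitting does exist \'etale-locally on $X$, where the pair $(X,D)$ admits a retraction and $A/(t^N)\cong B[t]/(t^N)$, and it holds in the paper's actual application since there $\overline{X}$ is \'etale over $\A^1_S$; but in the generality in which you argue, this step is unjustified. A clean repair that avoids the splitting altogether: the submodule $\mathcal{G}=\bigl((h)+(t^N)\bigr)/(t^N)\subset M$ is coherent on the finite-type $S$-scheme $\Spec A/(t^N)$, and $M$ is $R$-flat; by generic flatness there is a dense open $V\subset S$ over which $M/\mathcal{G}$ is flat, so that $\mathcal{G}\otimes_R\kappa(s)\to M\otimes_R\kappa(s)$ is injective for $s\in V$ and $T_N\cap V=V\setminus\pi(\Supp\mathcal{G})$ is constructible by Chevalley; one then concludes by Noetherian induction on $S$.
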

\begin{proof}
	Note that since $D\rightarrow S$ is smooth, $D_s\subset X_s$ is a smooth divisor for
	every $s\in S$, so it makes sense to talk about the pole order of $g|_{U_s}$ along
	$D_s$.

	Since $\Pol_{\leq n}(g)=\Pol_{\leq 0}(t^ng)$, it suffices
	to show that $\Pol_{\leq 0}(g)$ is constructible.

	The element $g$ defines a commutative diagram of $S$-schemes
	\begin{equation*}
		\begin{tikzcd}
			U\rar[hookrightarrow]\dar{g}& X\dar{g}\\
			\A^1_S\rar[hookrightarrow]&\P^1_S.
		\end{tikzcd}
	\end{equation*}
	 The image $g(X)\subset\P_S^1$ is a constructible set, so
	 $g(X)\cap(\{\infty\}\times S)$ is a constructible subset of $\P^1_S$. If
	 $\pr:\P^1_S\rightarrow S$ is the structure morphism of $\P^1_S$, then
	 $\pr(g(X)\cap (\{\infty\}\times S))$ is a constructible subset of $S$.
	 Finally note that $S\setminus \Pol_{\leq 0}(g)=\pr( (g(X)\cap (\{\infty\}\times
	 S))$.
\end{proof}
We are now ready to prove \autoref{thm:main:curves} with respect to a fixed
good partial compactification.
\begin{Proposition}\label{prop:curveCriterionPC}
	Let $(X,\overline{X})$ be a good partial compactification and $E$ a
	stratified bundle on $X$. Assume that for every $k$-morphism
	$\bar{\phi}:\overline{C}\rightarrow \overline{X}$ with
	$\overline{C}$ a smooth $k$-curve, the stratified bundle $\phi^*E$ is
	$(C, \overline{C})$-regular singular, where $C:=\bar{\phi}^{-1}(X)$,
	$\phi:=\bar{\phi}|_C$. 
	
	If the base field $k$ is uncountable, then
	$E$ is $(X,\overline{X})$-regular singular. 
\end{Proposition}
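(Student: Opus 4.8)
The plan is to reduce the statement to a condition at the generic point of a single boundary component, to rephrase regular singularity there as a uniform, level-by-level bound on pole orders, and then to test this bound in a family of transverse curves, using \autoref{lemma:poleorderconstructible} together with the hypothesis. First I would apply \autoref{prop:genericity}\ref{item:prop:genericity1} to reduce to verifying $(X,\overline{X})$-regular singularity near the generic point $\eta$ of each irreducible component $D$ of $\overline{X}\setminus X$; after removing a closed subset of codimension $\geq 2$ from $\overline{X}$ I may assume $D$ is smooth and irreducible and pick an \'etale coordinate $t$ with $D=V(t)$. If $\dim X=1$ the claim is immediate by applying the hypothesis to $\bar\phi=\id_{\overline{X}}$, so I assume $\dim X\geq 2$. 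Working over the discrete valuation ring $R=\mathcal{O}_{\overline{X},\eta}$ and using the Frobenius-descent presentation of $E$, the local description of regular singularity (\cite{Gieseker/FlatBundles}, \cite{Kindler/FiniteBundles}) says that $E$ is regular singular near $\eta$ if and only if there is a single integer $N$ bounding, over all Frobenius levels $m$, the pole order along $D$ of the matrices describing the level-$m$ stratification in a suitable local frame. I encode this by the invariant $\delta_m$, the smallest pole order along the boundary attained by the level-$m$ data over all choices of local frame, so that regular singularity near $\eta$ becomes $\sup_m\delta_m<\infty$.

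Next I would construct the family of test curves. After shrinking I may assume $\overline{X}$ is affine with an \'etale map to $\A^n$ in which $t=x_1$, and I project off the first coordinate to get a smooth morphism $\pi\colon\overline{X}\to B$ of relative dimension $1$ whose fibres are smooth curves transverse to $D$ and for which $D\to B$ is an isomorphism carrying $\eta$ to the generic point $\beta$ of $B$. For a closed point $s\in B(k)$ the fibre $\overline{C}_s:=\pi^{-1}(s)$ is a smooth curve meeting $D$ in a single reduced point $x_s$, so with $C_s:=\overline{C}_s\cap X$ the pair $(C_s,\overline{C}_s)$ is a good partial compactification and the inclusion $\bar\phi_s\colon\overline{C}_s\hookrightarrow\overline{X}$ is an admissible test morphism. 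The key geometric point is that the boundary point of the generic fibre $\overline{C}_\beta=\pi^{-1}(\beta)$ is exactly $\eta$, so the invariant $\delta_m$ of $E$ at $\eta$ equals the corresponding invariant $\delta_m(\beta)$ of $\overline{C}_\beta$, whereas on a closed fibre the analogous $\delta_m(s)$ measures the optimized pole order of $\phi_s^*E$ at $x_s$.

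The heart of the argument is then a Baire-type use of uncountability. For each $m$ and each $N$ the set $\{s\in B:\delta_m(s)\leq N\}$ is constructible: \autoref{lemma:poleorderconstructible} controls the pole orders of the matrix entries in families, and Chevalley's theorem absorbs the existential quantifier over local frames, which are parametrized by an auxiliary scheme over $B$. By hypothesis every $\phi_s^*E$ is $(C_s,\overline{C}_s)$-regular singular, so $\sup_m\delta_m(s)<\infty$ for all $s\in B(k)$; hence $B(k)=\bigcup_N T_N(k)$ where $T_N:=\bigcap_m\{\delta_m\leq N\}$. Were no $T_N$ Zariski-dense, each closure $\overline{T_N}$ would be a proper closed subset of $B$ and $B(k)$ would lie in a countable union of proper closed subsets, which is impossible for a positive-dimensional variety over an uncountable algebraically closed field. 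So some $T_{N_0}$ is dense; then each $\{\delta_m\leq N_0\}\supseteq T_{N_0}$ is a dense constructible set, hence contains a nonempty open and therefore the generic point $\beta$, giving $\delta_m(\beta)\leq N_0$ for all $m$. Thus $\sup_m\delta_m(\beta)<\infty$, i.e. $E$ is regular singular near $\eta$, and running over all components of $\overline{X}\setminus X$ and invoking \autoref{prop:genericity}\ref{item:prop:genericity1} gives $(X,\overline{X})$-regular singularity of $E$.

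I expect the main obstacle to be making the first and third steps precise simultaneously: pinning down the local criterion so that it genuinely reads as the uniform bound $\sup_m\delta_m<\infty$ and is compatible with restriction to the fibres $\overline{C}_s$, and then establishing constructibility of the frame-optimized condition $\{\delta_m\leq N\}$, where \autoref{lemma:poleorderconstructible} supplies the essential input but the existential over frames must be handled with care. It is precisely the possibility that $\delta_m(s)<\delta_m(\beta)$ on special fibres—so that each individual curve may admit a better frame than the generic one and thus fail to detect the generic bound—that prevents a single curve from sufficing; only a uniform $N_0$ valid on a dense subset, extracted through the countable-union argument over an uncountable field, transports back to the generic point $\eta$.
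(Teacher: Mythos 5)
Your global strategy is the paper's: reduce via \autoref{prop:genericity} to the generic point $\eta$ of a smooth irreducible boundary divisor, fibre $\overline{X}$ over $S=\A^{n-1}$ using the \'etale coordinates, use \autoref{lemma:poleorderconstructible} to get constructible pole-order loci, cover the closed points of $S$ by countably many closed sets $\mathbf{P}_{\leq N}$, invoke uncountability of $k$ to find $N_0$ with $\mathbf{P}_{\leq N_0}=S$, and transport the bound to the generic fibre. However, your frame-optimized invariant $\delta_m=\inf_{\text{frames}}(\text{pole order of the level-}m\text{ data})$ introduces a genuine gap at the very last step. As you define it, the minimizing frame may depend on $m$, so the conclusion $\sup_m\delta_m(\beta)\leq N_0$ is a statement of the form $\sup_m\inf_e(\cdot)<\infty$, whereas $(X,\overline{X})$-regular singularity near $\eta$ requires a \emph{single} coherent lattice stable under all the $\delta_{x_1}^{(m)}$ simultaneously, i.e.\ $\inf_e\sup_m(\cdot)<\infty$. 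This quantifier interchange is not justified and is exactly the content you would need to prove. A second, related problem is the constructibility of $\{s:\delta_m(s)\leq N\}$: local frames are not parametrized by a finite type scheme over $B$ (change-of-basis matrices in $\GL_r$ over $\mathcal{O}_{C_s,x_s}[t^{-1}]$ have unbounded pole order), so Chevalley does not absorb the existential quantifier without first bounding the frames, which reintroduces a countable union and destroys the closedness you need for the Baire-type step.

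The paper avoids both issues by never optimizing over frames. One fixes a single global frame $e_1,\ldots,e_r$ of $E$ over $A[x_1^{-1}]$ and works throughout with the pole orders of the matrix entries $b_{ij}^{(m)}(\bar y_c^h)$ in that frame. A uniform bound $N$ in this fixed frame suffices for regular singularity, since the $\mathscr{D}_{\overline{X}/k}(\log D)$-module generated by $\sum_i e_iA$ is then contained in $\bigoplus_i x_1^{-N}e_iA$; conversely, regular singularity of $\phi_s^*E$ on a curve forces a bound $N_s$ in the \emph{restricted fixed frame}, because over the discrete valuation ring $\mathcal{O}_{\overline{C}_s,x_s}$ any lattice and any coherent torsion-free logarithmic extension are $t^{\pm M}$-commensurable. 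With the fixed frame, \autoref{lemma:poleorderconstructible} applies verbatim to the countably many functions $b_{ij}^{(m)}(\bar y_c^h)$ and the rest of your argument goes through unchanged. (Your closing diagnosis of why a single curve cannot suffice is also slightly off: the obstruction is not that special fibres admit better frames, but simply that the pole order of a fixed function can drop upon restriction to a special fibre, so only a dense open set of test curves computes the pole order at $\eta$.)
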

\begin{proof}
	Let $k$ be an uncountable algebraically closed field of characteristic $p>0$.  We immediately reduce to the case
	where $X$ is connected and $\dim X \geq 2$. By removing a closed set
	of codimension $\geq 2$ from $\overline{X}$ we may assume that
	$D:=(\overline{X}\setminus X)_{\red}$ is a smooth divisor; by treating
	its components separatedly, we may assume that $D$ is irreducible with
	generic point $\eta$.
	Shrinking $\overline{X}$ further, we may assume that \begin{itemize}
	\item $\overline{X}=\Spec A$ is affine, \item there exist \'etale coordinates
		$x_1,\ldots, x_n\in A$ such that $D=V(x_1)$.  \item $E$
			corresponds to a free $A[x_1^{-1}]$-module, say with
			basis $e_1,\ldots, e_r$.  \end{itemize} If we write
			$\delta^{(m)}_{x_1}:=x_1^m\partial_{x_1}^{(m)}\in
			\mathscr{D}_{X/k}$ (see \autoref{rem:logDiffOps}), then for $f\in A$ we can
			also write 
			\[\delta_{x_1}^{(m)}(fe_i)=\sum_{j=1}^r
				b_{ij}^{(m)}(f) e_j,\text{ with }
				b_{ij}^{(m)}(f)\in A[x_1^{-1}].\] 
		To show that
		$E$ is regular singular, it suffices to show
		that the pole order of the elements $b_{ij}^{(m)}(f)$ along
		$x_1$ is bounded by some $N\in \N$, because then the
		$\mathscr{D}_{\overline{X}/k}(\log D)$-module
		generated by $\sum_{i=1}^r e_i A$ is contained
		in $\left(\bigoplus_{i=1}^r x_1^{-N}e_i A\right)$, and thus
		finitely generated over $A$.

		Write $A$ as the
		quotient of a polynomial ring $k[y_1,\ldots,
		y_d]$ and $\bar{y}_i$ for the image of $y_i$ in $A$. It then suffices to show that the pole order of
		$b_{ij}^{(m)}(\bar{y}_c^{h})$ has a common upper bound, for
		$1\leq i,j\leq r$, $1\leq c \leq d$, $m, h\geq 0$.

		Define $S:=\A^{n-1}_k=\Spec k[x_2,\ldots, x_n]$.  We get a
		commutative diagram 
		\begin{equation}\label{eqn:fibration}
			\begin{tikzcd}
				X\rar[hookrightarrow]&\overline{X}\rar{\text{\'etale}}\ar[swap]{dr}{\text{smooth}}&\A^1_S\dar\\
			&&S
			\end{tikzcd}
		\end{equation}
		We are now in the situation of
		\autoref{lemma:poleorderconstructible}. For $N\in \N$ consider the constructible
			sets $\Pol_{\leq
			N}(b_{ij}^{(m)}(\bar{y}_{c}^h))\subset S$, and define
			\[\mathbf{P}_{\leq N}:=\bigcap_{i,j,m,c,h}
				\overline{\Pol_{\leq
				N}(b^{(m)}_{ij}(\bar{y}_c^h))}.\] This is a
				closed subset of $S$.  Now since for every
				closed point $s\in S$ the fiber $X_s$
				is a regular curve over $k$ meeting $D$
				transversally, we see that by
				assumption $E|_{X_s}$ is
				$(X_s,\overline{X}_s)$-regular singular.  But
				this means that there is some $N_s\geq 0$,
				such that $s\in \mathbf{P}_{\leq N_s}$.  In
				other words, the union $\bigcup_{N\geq 0}
 				\mathbf{P}_{\leq N}$ contains all closed
				points of $S$.
				Since $k$ is uncountable and since the
				$\mathbf{P}_{\leq N}$ are closed subsets of
				$S$, this means there exists some $N_0\geq 0$,
				such that $\mathbf{P}_{\leq N_0}=S$.
				The definition
				of $\mathbf{P}_{\leq N_0}$ and
				\autoref{lemma:poleorderconstructible} imply
				that the sets $\Pol_{\leq
				N_0}(b_{ij}^{(m)}(\bar{y}_c^h))$ are
				\emph{dense constructible} subsets of $S$.
				But a dense constructible subset of an
				irreducible noetherian space contains an open
				dense subset by \cite[Prop.~10.14]{Goertz}.
				This shows that the pole order of
				$b_{ij}^{(m)}(\bar{y}_c^{h})$ along $x_1$ is
				bounded by $N_0$, and thus that $E$ is
				$(X,\overline{X})$-regular singular.
\end{proof}

Now we can easily finish the proof of \autoref{thm:main:curves}.
\begin{proof}[Proof of \autoref{thm:main:curves}]
	We have proved \autoref{thm:main:pullback} which shows that if $E$ is
	a regular singular stratified bundle on $X$, then $\phi^*E$ is regular
	singular for every map $\phi:C\rightarrow X$ with $C$ a smooth
	$k$-variety.

	For the converse, assume that $\phi^*E$ is regular singular for every
	morphism $\phi:C\rightarrow X$ with $C$ a smooth $k$-curve.
	We have to show that $E$ is $(X,\overline{X})$-regular singular with
	respect to every good partial compactification
	$(X,\overline{X})$. But the assumptions of
	\autoref{prop:curveCriterionPC} are satisfied, so $E$ is
	$(X,\overline{X})$-regular singular.
\end{proof}

Finally, we give a proof of \autoref{cor:arbitraryField}.

\begin{proof}[Proof of \autoref{cor:arbitraryField}]
	We need to show that $E$ is $(X,\overline{X})$-regular singular for
	every
	good partial compactification $(X,\overline{X})$, whenever the
	condition of this corollary is satisfied.
	We may assume that $\overline{X}\setminus X$ is irreducible.	As in
	the proof of \autoref{thm:main:curves}, we reduce
	to $\overline{X}$ affine and $E$ free, so that showing that $E$ is
	$(X,\overline{X})$-regular singular boils down to showing
	that the pole order of a certain set of functions in $k(X)$ has a
	common bound. This is independent of the coefficients, so we may base change to a
	field $K\supset k$, $K$ algebraically closed and uncountable (e.g.
	$K:=\overline{k\llparen t\rrparen}$). Then we can apply the theorem, to see that
	$E_K$ is regular singular if it is regular singular along all smooth $K$-curves.
	But every such curve is defined over a subextension $k'$ of	$K/k$, finitely generated over $k$.  The corollary follows.
\end{proof}


\end{document}